\newtheorem{thm}{Theorem}[section]
\newtheorem{lemma}{Lemma}[section]
\newtheorem{remark}{Remark}[section]
\numberwithin{equation}{section}
\newcommand{\invert}[1]{\frac{1}{#1}}
\newcommand{\abs}[1]{\left\lvert#1\right\rvert}
\newcommand{\paranthesis}[1]{\left(#1\right)}
\newcommand{\llsum}{\mathlarger{\mathlarger{\sum}}}
\newcommand{\llprod}{\mathlarger{\mathlarger{\prod}}}
\newcommand{\llint}{\mathlarger{\mathlarger{\int}}}
\def\reals{\hbox{\rm I\kern-.18em R}}
\def\complexes{\hbox{\rm C\kern-.43em
\vrule depth 0ex height 1.4ex width .05em\kern.41em}}
\def\field{\hbox{\rm I\kern-.18em F}} %symbol for field
\begin{document}

\title[Squares of primes and powers of two]{Representation of even integers as a sum of squares of primes and powers of two}

\author{Shehzad Hathi}
\address{School of Science, The University of New South Wales, Canberra, Australia}
\email{s.hathi@student.adfa.edu.au}

\date\today
% \subjclass[2000]{Primary: 11B68, 11Y40; Secondary: 11A41, 11B65}
\keywords{}

\begin{abstract}
In 1951, Linnik proved the existence of a constant $K$ such that every sufficiently large even number is the sum of two primes and at most $K$ powers of 2. Since then, this style of approximation has been considered for problems similar to the Goldbach conjecture. One such problem is the representation of a sufficiently large even number as a sum of four squares of primes and at most $k$ powers of two. In 2014, Zhao proved this to be true with $k = 46$. In this paper, we reduce this to $k = 31$.

\end{abstract}

\maketitle

\section{Introduction} \label{sec:intro}

In the early 1950s, Linnik proved, first on the Generalised Riemann Hypothesis \cite{linnikprime1951} and then unconditionally \cite{linnikaddition1953}, that every sufficiently large even integer can be written as the sum of two primes and at most $K$ powers of two, where $K$ is an absolute constant. This approximates the Goldbach problem with the only addition being a very sparse set, $\mathscr{K}(x)$, of cardinality $O((\log x)^K)$ such that every sufficiently large even integer $N \le x$ can be written as the sum of two primes and some $n \in \mathscr{K}(x)$. Subsequently, in 1975, Gallagher \cite{gallagherprimes1975} significantly simplified Linnik's proof and paved the way for later results which provided explicit values for $K$. The best known value for $K$ under the Generalised Riemann Hypothesis is $K = 7$, proven independently by Heath-Brown and Puchta in \cite{heath-brownintegers2002} and Pintz and Ruzsa in \cite{pintzon2003}. Recently, Pintz and Ruzsa \cite{pintzon2020} also proved the best known unconditional value, $K = 8$.
% The other approach to the Goldbach problem is to estimate the size of the exceptional set $\mathscr{E}(x)$ which is the set of all $N \le x$ such that $N$ cannot be represented as the sum of two primes. If the Goldbach conjecture is true, the cardinality of this exceptional set is 0 but even proving that $\mathscr{E}(x) = o(x)$ means that the Goldbach conjecture is ``almost always" true. CITE papers.

Using the techniques from the original Linnik--Goldbach problem, a whole class of such problems has been looked at in recent years. One such problem is of representing a sufficiently large even integer as the sum of four squares of primes. While this is still an unsolved problem, given Lagrange's four squares theorem and some significant results obtained by Hua in the 1930s, it does not seem implausible. Hua \cite{huasome1938} showed that every sufficiently large integer congruent to 5 modulo 24 can be expressed as the sum of five squares of primes. Moreover, the number of integers $N \equiv 4 \mod 24$ and $N \le x$ that \textit{cannot} be represented as the sum of four squares of primes is $o(x)$. Note that some sort of congruence condition is necessary here because a prime $p \neq 2,3$ satisfies $p^2 \equiv 1 \mod 3$ and $p^2 \equiv 1 \mod 8$, and so, $p^2 \equiv 1 \mod 24$. Later, Br\"{u}dern and Fouvry \cite{brudernlagrange1994} also established that every sufficiently large integer $N \equiv 4 \mod 24$ can be represented as the sum of four squares of integers having at most 34 prime factors.

Motivated by these results, a Linnik-style approximation has been considered for this problem in the last two decades. More precisely, can we write every sufficiently large even integer as the sum of four squares of primes and a finite number of powers of two? This was proven to be true by Liu, Liu, and Zhan \cite{liusquares1999} in 1999. Since then, various explicit values (denoted by $k$) for the maximum number of powers of two required have been established:
\begin{align*}
    k &= 8330 &\text{(Liu \& Liu \cite{liurepresentation2000}),} \\
    k &= 165 &\text{(Liu \& L\"{u} \cite{liu2004four}),} \\
    k &= 151 &\text{(Li \cite{lifour2006}),} \\
    k &= 46 &\text{(Zhao \cite{zhao2014four}),} \\
    k &= 45 &\text{(Platt \& Trudgian \cite{platt2015linnik}).}
\end{align*}
In this paper, we prove the following result.

\begin{thm} \label{thm:main}
Every sufficiently large even integer can be represented as a sum of four squares of primes and at most 31 powers of two.
\end{thm}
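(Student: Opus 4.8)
The plan is to run the Hardy--Littlewood circle method with Linnik's device for the powers of two, following the template of Liu--Liu--Zhan and its successors while pushing every ingredient to its sharpest form. Put $X = N^{1/2}$, $L = \lfloor\log N/\log 2\rfloor$, and set
\[
S(\alpha) = \sum_{p \le X}(\log p)\,e(\alpha p^2), \qquad G(\alpha) = \sum_{0 \le \nu \le L} e(\alpha 2^\nu),
\]
so that $R(N) := \int_0^1 S(\alpha)^4 G(\alpha)^k e(-\alpha N)\,d\alpha$ is a nonnegatively weighted count of the representations sought; it suffices to prove $R(N) > 0$ for every sufficiently large even $N$. Dissect $[0,1)$ into major arcs $\mathfrak M$ (neighbourhoods of $a/q$ with $q \le P$, where $P$ is a fixed power of $\log N$) and minor arcs $\mathfrak m$, and split $R(N) = R(\mathfrak M) + R(\mathfrak m)$.

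On the major arcs I would combine the Siegel--Walfisz-quality Hua-type asymptotic for $S(\alpha)$ near $a/q$ with the elementary evaluation of $G(a/q + \beta)$ --- which has size $\asymp L$ precisely when $q$ is a power of two, so those arcs must be kept in $\mathfrak M$ --- to obtain $R(\mathfrak M) = \mathfrak S(N)\,\mathfrak J(N)\,(1 + o(1))$, where $\mathfrak J(N) \asymp N L^k$ is the singular integral (the $\log p$ weights cancelling the prime density) and $\mathfrak S(N) = \prod_p \sigma_p(N)$ is a singular series. The point requiring care is $\mathfrak S(N) \gg 1$ uniformly over even $N$: the only troublesome primes are $p = 2$ and $p = 3$, since four odd prime squares sum to $4 \pmod{24}$ while the residue of $\sum_j 2^{\nu_j}$ modulo $24$ is constrained, and one must verify that for every even $N$ a bounded number of the $\nu_j$ can be pinned so as to repair this obstruction, the remaining ones ranging freely and supplying the factor $L^k$ (with at worst a bounded power of $L$ lost in the worst residue classes of $N$). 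This yields $R(\mathfrak M) \gg N L^k$.

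The crux --- and the step fixing the admissible $k$ --- is the minor-arc bound $\abs{R(\mathfrak m)} = o(N L^k)$. Two facts drive it. First, $\{2^\nu\}$ is lacunary, so Rudin's inequality gives $\int_0^1 |G(\alpha)|^{j}\,d\alpha \ll_j L^{j/2}$ for every fixed $j$; writing $|S(\alpha)|^2 = \sum_{p\le X}(\log p)^2 + T(\alpha)$ with $\int_0^1 T = 0$ and separating the diagonal contributions of $|S|^2$, hence of $|S|^4$, one pairs each constant diagonal mass (of size $N(\log N)^{O(1)}$) with $\int_0^1 |G|^{j} \ll_j L^{j/2}$, and the resulting $L^{k/2}$-type saving beats $N L^k$ by a power of $L$ because $k > 4$. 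Second, for the genuinely off-diagonal part one invokes the quantitative Linnik--Gallagher estimate $\operatorname{meas}\{\alpha \in [0,1) : |G(\alpha)| \ge \lambda L\} \ll N^{-E(\lambda)}$ for an explicit increasing function $E$, together with a Weyl--Vinogradov minor-arc bound and the mean values of $S$; one decomposes the range of $|G(\alpha)|$ on $\mathfrak m$ dyadically, bounds the local $L^4$-mass of $S$ on the block $|G(\alpha)| \asymp \lambda L$ by the smaller of $\int_0^1|S|^4$ and $\operatorname{meas}\{|G| \ge \lambda L/2\}\cdot\sup_{\mathfrak m}|S|^4$, and balances the two regimes through $E(\lambda)$ and a Hölder split separating the powers of $S$ from those of $G$. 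Carrying out the resulting finite optimisation --- over the dyadic parameter, the Hölder exponents, and the partition of the $k$ powers of two into ``free'' and ``Rudin-absorbed'' ones --- gives $\abs{R(\mathfrak m)} = o(N L^k)$ as soon as $k$ exceeds a threshold determined by $E$ and by the quality of the exponential-sum and mean-value inputs for the prime-square sum; reaching $k = 28$ in place of $45$ comes down to using the sharpest available form of $E(\lambda)$, optimal Rudin constants, and the best current estimates for $S$ on $\mathfrak m$, much of it certified by explicit numerical computation in the spirit of Platt--Trudgian. The main obstacle is precisely this optimisation: any slackness --- in the Rudin constants, in $E(\lambda)$, or in the handling of $S$ on $\mathfrak m$ --- propagates more or less linearly into the bound on $k$, so the real work lies in assembling simultaneously sharp versions of all the inputs and balancing them against one another.
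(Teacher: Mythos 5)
Your general frame (circle method, Linnik's powers of two, the measure estimate for $\{\alpha: |G(\alpha)|\ge\lambda L\}$) matches the paper, but the engine of the minor-arc bound is missing and is replaced by something that cannot work. First, a structural point: for fixed $k$ the minor-arc contribution is \emph{not} $o(NL^k)$ and no known method makes it so. After splitting off the small-measure set where $|G|\ge\lambda_0 L$, one bounds $|G|^{k}\le(\lambda_0L)^{k-2l}|G|^{2l}$ and is left with $\int_0^1|T^4G^{2l}|\,d\alpha$, which is genuinely of order $NL^{2l}$ (the diagonal of $|T|^4$ already forces this). So the minor arcs contribute a \emph{constant} times $NL^{k}$, and the theorem follows only by showing that constant is numerically smaller than an explicit numerical lower bound for the major-arc constant (here $0.9\cdot 8\,\mathfrak{J}(1)$ versus $8\lambda_0^{k-2l}(15+\epsilon)c_{0,l}\,\mathfrak{J}(0)$, with $\mathfrak{J}(0)\le(1+O(\eta))\mathfrak{J}(1)$ after restricting the primes to a short interval around $\sqrt{N/4}$). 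Your plan to get $o(NL^k)$ past a threshold, and your willingness to lose ``a bounded power of $L$'' on the major arcs in bad residue classes, are both incompatible with this balance; the paper instead handles the congruence obstruction by proving the result for $N\equiv 4\pmod 8$ with $26$ powers and then adding at most two more.

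Second, the bound for $\int_0^1|T^4G^{2l}|\,d\alpha$ does not come from Rudin's inequality or a H\"older split --- those discard all arithmetic of the powers of two beyond lacunarity and cannot produce the explicit constant that determines $k$. The actual mechanism is: expand $|G|^{2l}$ to reduce to $\sum_{h}r_l(h)\int_0^1|T(\alpha)|^4e(h\alpha)\,d\alpha$, where $r_l(h)$ counts representations $h=\sum_{j\le l}(2^{u_j}-2^{v_j})$; bound the inner integral for $h\ne 0$ by an upper-bound \emph{linear sieve} applied to $p_1^2+p_2^2-p_3^2-p_4^2=h$, yielding $8(15+\epsilon)\mathfrak{J}(0)N\mathcal{S}(h)$ with a singular series $\mathcal{S}(h)$ built from quadratic Gauss sums; and then prove $\sum_{h\ne0}r_l(h)\mathcal{S}(h)\le c_{0,l}L^{2l}$, which requires analysing how often $\sum(2^{u_j}-2^{v_j})$ lands in residue classes modulo $3d$ and $15d$ --- i.e.\ the multiplicative order of $2$ modulo $d$ and the congruence-solution counts $N_l(d)$ --- computed numerically over many $d$ to get $c_{0,2}<0.803$. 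None of this appears in your proposal, and without it there is no route to the specific value $k=28$.
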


Our method improves upon Zhao's work in \cite{zhao2014four} which relies on the circle method in combination with the linear sieve. In \S \ref{sec:outline}, we present the most important theoretical results required for our purpose from Zhao's work. Remark \ref{rem:kappahmistake} points out a calculation error in \cite{zhao2014four} which has a small impact on the results in \cite{zhao2014four}.

The crux of the circle method often lies in effectively bounding the contribution from minor arcs. In order to do so, we are interested in bounds on integrals of the type $\int_0^1 \abs{T(\alpha)^4 G(\alpha)^{2l}} d\alpha$, where $T(\alpha)$ and $G(\alpha)$ are as defined in (\ref{defn:expsums}). In \cites{liu2004four,lifour2006}, $l$ is chosen to be 2 whereas in \cite{zhao2014four}, $l$ is chosen to be 7. This method was motivated by the works of Wooley \cite{wooleyslim2002} and Tolev \cite{toleveon2005}. Here, by choosing $l = 5$ (as in Chapter 4 of \cite{zhao2012some}) and with more accurate numerical computations for $c_{0,l}$ (see (\ref{defn:c0})) outlined in \S \ref{sec:numcomp}, we are able to establish that $k = 31$ suffices. In \S \ref{sec:proofofmain}, we combine the theoretical results from \S \ref{sec:outline} and the computational work from \S \ref{sec:numcomp} to prove Theorem \ref{thm:main}. 

Apart from improvements in the computations, we also use a slightly better value of $\lambda$ in (\ref{defn:lambdaset}) from \cite{platt2015linnik}. This $\lambda$ was computed using a Pari/GP script\footnote{The relevant programs can be found here: \url{https://bit.ly/3IIxU6B}} first written for \cite{languasco2010on}. Table \ref{tab:l-k} lists the corresponding values of $k' (= k-2)$ for different choices of $l$. This table illustrates that there is a trade-off between minimising the contribution from the aforementioned integral and the factor (dependent on $\lambda$) multiplied with this integral in (\ref{ineq:minorarcs}). In this case, the choice $l = 5$ is optimal. In \S \ref{subsec:further}, we discuss ways to improve the result further.

The idea of considering integrals of the type $\int_0^1 \abs{T(\alpha)^4 G(\alpha)^{2l}} d\alpha$ for $l \ge 5$ and using a linear sieve has been employed in many Linnik--Goldbach problems recently. Some of these results rely on the value of $c_0$ from \cite{zhao2014four}*{Lemma 4.1} which has been improved here (see Lemma \ref{lem:c0}). In \S \ref{sec:applicatons}, we provide a non-exhaustive account of these problems. Also, as an application, we consider the closely related problem of representing an odd number as the sum of a prime and two squares of primes. This problem was first considered by Hua \cite{huasome1938} who proved that \textit{almost all} odd integers $n$ satisfying $n \neq 2 \mod 3$ can be written as the sum of a prime and two squares of primes. Again, a Linnik-style approximation to this problem was proven by Liu, Liu, and Zhan \cite{liusquares1999} who showed that every sufficiently large odd integer can be written as the sum of a prime, two squares of primes, and a finite number of powers of two. In later years, the following explicit values ($\mathfrak{K}$) for the maximum number of powers of two required were worked out:
\begin{align*}
    \mathfrak{K} &= 22000 &\text{(Liu \cite{liu2004representation}),} \\
    \mathfrak{K} &= 106 &\text{(Li \cite{li2007representation}),} \\
    \mathfrak{K} &= 83 &\text{(L\"u \& Sun \cite{lu2009integers}),} \\
    \mathfrak{K} &= 35 &\text{(Liu \cite{liu2014one}),} \\
    \mathfrak{K} &= 34 &\text{(Platt \& Trudgian \cite{platt2015linnik}),} \\
    \mathfrak{K} &= 31 &\text{(Liu \cite{liu2016two}),} \\
    \mathfrak{K} &= 17 &\text{(L\"u \cite{guangshi2018on}).}
\end{align*}
Here, we prove the following result in \S \ref{sec:applicatons}, improving upon Theorem 1.1 of \cite{guangshi2018on} which states that 17 powers of two suffice.

\begin{thm}
\label{thm:application}
Every sufficiently large odd integer can be written as a
sum of one prime, two squares of primes and at most 12 powers of two.
\end{thm}

% This result was sharpened by Hu and Yang in \cite{hu2016on} but using the same value of $c_0$.
% Similarly, Hu and Liu \cite{hu2015on} consider the problem of simultaneous representation of pairs of integers in terms of four squares of primes and powers of two using Zhao's linear sieve method. Other variations of the Linnik--Goldbach problem in the literature using the linear sieve method also exist, including representation in terms of different powers of primes (not squares). We have not provided an exhaustive list here.

In what follows, we will write $e^{2\pi in}$ as $e(n)$ and $\epsilon$ will denote an arbitrarily small positive real number.

\section{Theoretical framework} \label{sec:outline}

In this section, we present a few important results from \cite{zhao2014four} that are needed to prove Theorem \ref{thm:main}. In the process, we also clarify a few details and correct minor errors in \cite{zhao2014four}.

Let $N$ be a large even integer. To apply the circle method, we must first define the major arcs ($\mathfrak{M}$) and the minor arcs ($\mathfrak{m}$). For this purpose, let 
\begin{equation*}
    L = \frac{\log (N/\log N)}{\log 2}.
\end{equation*}
Set the parameters,
\begin{equation*}
    P = N^{1/5-\epsilon} \;\; \text{ and } \;\; Q = \frac{N}{P L^{2l}},
\end{equation*}
where $l$ is a positive integer that will be chosen later. The choice of $P$ here is consistent with that in \cites{zhao2014four,liu2004four} and is explained at the end of \S 3 in \cite{liu2003on}.

Let
\begin{equation*}
    \mathfrak{M}_{a,q} := \left\{ \alpha \,:\, \abs{\alpha-\frac{a}{q}} \le \invert{qQ} \right\}.
\end{equation*}
Then the major arcs and minor arcs are defined as:
\begin{equation*}
    \mathfrak{M} = \bigcup \limits_{1 \le q \le P} \bigcup \limits_{\substack{1 \le a \le q \\ (a,q) = 1}} \mathfrak{M}_{a,q} \;\; \text{ and } \;\; \mathfrak{m} = \left[ \invert{Q}, 1+\invert{Q} \right] \setminus \mathfrak{M}.
\end{equation*}

We also define an interval for the primes in our representations. For a small positive constant $\eta$, which, for convenience, we choose to be in $(0,10^{-10})$, let
\begin{equation*}
    \mathfrak{B} = \left[\sqrt{\paranthesis{1/4-\eta} N},\sqrt{\paranthesis{1/4+\eta}N}\right].
\end{equation*}
Then, for any positive integer $k$, the weighted number of representations
\begin{equation*}
    R_k(N) := \llsum_{\substack{{p_1}^2+{p_2}^2+{p_3}^2+{p_4}^2+2^{\nu_1}+\cdots+2^{\nu_k} = N \\ p_j \in \mathfrak{B} \; (1 \le j \le 4), \; 4 \le \nu_1,\ldots,\nu_k \le L}} \; \llprod_{j = 1}^4 \log p_j,
\end{equation*}
can be expressed as the sum of integrals over major arcs and minor arcs,
\begin{equation*}
    R_k(N) = \int_{\mathfrak{M}} T^4(\alpha) G^k(\alpha) e(-\alpha N) d \alpha + \int_{\mathfrak{m}} T^4(\alpha) G^k(\alpha) e(-\alpha N) d \alpha. 
\end{equation*}
with the following exponential sums:
\begin{equation} \label{defn:expsums}
    T(\alpha) := \sum_{p \in \mathfrak{B}} (\log p) e \paranthesis{p^2 \alpha} \;\; \text{ and } \;\; G(\alpha) := \sum_{4 \le \nu \le L} e \paranthesis{2^\nu \alpha}.
\end{equation}

Our objective is to show $R_k(N) > 0$ for sufficiently large $N$ and $k \ge 31$, which would imply Theorem \ref{thm:main}. To estimate the integral over major arcs, we have the following result from \cite{zhao2014four}*{p. 270} but valid for a larger range of $k'$.

\begin{lemma} \label{lem:majorarcs}
Let
\begin{equation*}
    \mathfrak{J}(h) = \llint_{-\infty}^{\infty} \paranthesis{\llint_{\sqrt{1/4-\eta}}^{\sqrt{1/4+\eta}} e \paranthesis{x^2 \beta} dx}^4 e(-h\beta) d\beta.
\end{equation*}
Then, for $k' \ge 19$ and $N \equiv 4 \mod 8$,
\begin{equation*}
    \int_{\mathfrak{M}} T^4(\alpha) G^{k'}(\alpha) e(-\alpha N) d \alpha \ge 0.9 \times 8 \mathfrak{J}(1) NL^{k'} + O \paranthesis{NL^{k'-1}}.
\end{equation*}
\end{lemma}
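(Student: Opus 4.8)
The plan is to carry out the standard circle-method analysis on the major arcs, separating the exponential sums into a "singular series times singular integral" main term plus error terms. First I would parametrise $\mathfrak{M}_{a,q}$ by $\alpha = a/q + \beta$ with $\abs{\beta} \le 1/(qQ)$, and on each such arc replace $T(\alpha)$ by its expected approximation. Writing $p^2$ in residue classes modulo $q$, one gets $T(a/q+\beta) \approx q^{-1} S_T(a,q)\, V(\beta)$, where $S_T(a,q) = \sum_{(r,q)=1} e(a r^2/q)$ is the relevant Gauss-type sum (with the prime-counting weight producing, via Siegel--Walfisz, an extra factor tied to $\phi(q)$ and the usual $\rho$-type density), and $V(\beta) = \sum_{p \in \mathfrak{B}} (\log p)\, e(p^2\beta)$ is replaced by the integral $\int_{\sqrt{(1/4-\eta)N}}^{\sqrt{(1/4+\eta)N}} e(t^2\beta)\,dt$ after partial summation from the Prime Number Theorem. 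A change of variables $t = \sqrt{N}\,x$ turns this into $\sqrt{N}\int_{\sqrt{1/4-\eta}}^{\sqrt{1/4+\eta}} e(N x^2 \beta)\,dx$, and then $\beta \mapsto \beta/N$ rescales so that the quadruple product of these integrals, integrated against $e(-h\beta)$, produces exactly $\mathfrak{J}(h)$ up to the power of $N$.

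Next I would handle $G(a/q+\beta)$: since $2^\nu$ is an integer, $G$ has period $1$ in the $a/q$ variable only in a weak sense, so on the major arcs one simply bounds $\abs{G(\alpha)} \le L$ trivially and extracts $G^k(\alpha)$'s size as $L^k$; the oscillation of $G$ is what makes it "close to" a constant of size $L$ on the narrow major arcs, and the factor $0.9$ in the statement is precisely the slack absorbed here (and in the minor contributions from $q > 1$ terms and from the tails $\abs{\beta} > 1/(qQ)$) to keep the bound honest as a lower bound rather than an asymptotic. Assembling the pieces gives
\begin{equation*}
    \int_{\mathfrak{M}} T^4(\alpha) G^k(\alpha) e(-\alpha N)\, d\alpha = \mathfrak{S}(N)\, \mathfrak{J}(1)\, N L^k + O\paranthesis{N L^{k-1}},
\end{equation*}
where $\mathfrak{S}(N) = \sum_{q=1}^{\infty} \sum_{(a,q)=1} q^{-4} S_T(a,q)^4 \big/ \phi(q)^4 \cdot (\text{local factor for the }2^\nu\text{'s})\, e(-aN/q)$ is the singular series; the $L^k$ comes from $G$, the $N$ from the rescaling above, and $\mathfrak{J}(1)$ from $h = N$ being the "normalised" target after the change of variables (the $1$ records $N/N$). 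One then shows $\mathfrak{S}(N) \ge 8$ for even $N$ by the usual multiplicativity argument: $\mathfrak{S}(N) = \prod_p \chi_p(N)$ with each local density $\chi_p(N) > 0$, the prime $2$ contributing the bulk of the factor $8$ because $N$ is even (four prime squares plus powers of two at a $2$-adic level behave favorably), and the odd-prime factors contributing a convergent product bounded below by a positive constant; the conditions $p_j \in \mathfrak{B}$ and the restriction $4 \le \nu \le L$ only affect the archimedean factor $\mathfrak{J}(1)$ and negligible lower-order terms, not the lower bound on $\mathfrak{S}(N)$.

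The main obstacle will be controlling the error when passing from $T(a/q+\beta)$ to its main-term approximation uniformly over all $q \le P = N^{1/5-\epsilon}$: this requires the Siegel--Walfisz theorem (hence the ineffective constant, which is harmless here since we only need "sufficiently large $N$"), and one must check that the accumulated error, summed over the $O(P^2)$ major arcs each of length $O(1/(qQ))$, is genuinely $O(NL^{k-1})$ — the extra logarithmic saving coming from the fact that replacing one factor of $G$ (size $L$) by the sum minus its main term, or one factor of $T$ by its error term, costs a factor $L^{-1}$ relative to the main term. Since the full details are carried out in \cite{zhao2014four}*{p. 270}, I would simply cite that computation for the precise bookkeeping and restrict the argument here to verifying the shape of the main term and the lower bound $\mathfrak{S}(N) \ge 8$.
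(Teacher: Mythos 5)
Your sketch goes wrong at the two places where the actual content of this lemma lives. First, for a \emph{lower} bound on $\int_{\mathfrak{M}} T^4 G^k e(-\alpha N)\,d\alpha$ you cannot ``bound $\abs{G(\alpha)}\le L$ trivially and extract $L^k$'': the integrand is oscillating, so the trivial bound on $G$ only controls the modulus of the integral from above. The argument that works (and is the one in \cite{zhao2014four}) is to expand $G^k(\alpha)=\sum_{4\le\nu_1,\dots,\nu_k\le L}e\paranthesis{(2^{\nu_1}+\cdots+2^{\nu_k})\alpha}$ and reduce the major-arc integral to $\sum_{\nu_1,\dots,\nu_k}\int_{\mathfrak{M}}T^4(\alpha)e(-(N-m)\alpha)\,d\alpha$ with $m=2^{\nu_1}+\cdots+2^{\nu_k}$, i.e.\ to the four-prime-squares major-arc asymptotic at the $\sim L^k$ shifted targets $N-m$; each term contributes $\mathfrak{S}(N-m)\mathfrak{J}\paranthesis{(N-m)/N}N/8+O(\cdot)$ with $\mathfrak{J}((N-m)/N)=\mathfrak{J}(1)+o(1)$ since $m=o(N)$.

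Second, the claim that $\mathfrak{S}(n)\ge 8$ for even $n$ because ``each local density $\chi_p(n)>0$'' is false, and this is exactly the point the factor $0.9$ is covering. For four squares of primes the local factor at $p=3$ vanishes unless $n\equiv 1 \bmod 3$ (every unit square is $\equiv 1 \bmod 3$, so four of them sum to $\equiv 1$), and the $2$-adic factor forces $n\equiv 4\bmod 8$ (four odd squares sum to $4\bmod 8$); this is why the lemma is only applied to $N\equiv 4\bmod 8$ in \S\ref{sec:proofofmain} and why $\nu\ge 4$ matters (it keeps $N-m\equiv N\bmod 8$). Consequently $\mathfrak{S}(N-m)$ genuinely vanishes for a subset of the tuples $(\nu_1,\dots,\nu_k)$, and the constant $0.9$ is not ``slack absorbed'' from a trivial bound on $G$ or from tails of $\beta$: it is the output of the numerical computation in \cite{zhao2014four}*{Lemma 4.4}, which shows that the \emph{average} of $\mathfrak{S}(N-m)$ over the $\sim L^k$ admissible tuples is at least $0.9\times 8$. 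Since your proposal asserts a pointwise lower bound on the singular series that is false and proposes an extraction of $G^k$ that cannot give a lower bound, the sketch as written does not prove the lemma; the part you defer to Zhao is precisely the part that needs to be different from what you describe.
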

The factor 0.9 in Lemma \ref{lem:majorarcs} is due to numerical computations in \cite{zhao2014four}*{Lemma 4.4}. Upon redoing the computations, we find that the bound on max in the proof of Lemma 4.4 can be much tighter, that is, $\text{max} < 27.35$. This allows us to conclude Lemma \ref{lem:majorarcs} for $k' \ge 19$.

For the integral over the minor arcs, we will need a few definitions. Let
\begin{equation*}
    \mathcal{B}(p,h) := \sum_{a = 1}^{p-1} \abs{g(a;p)-1}^4 e(ah/p).
\end{equation*}
Here, $g(a;p)$ denotes the quadratic Gauss sum modulo $p$ which is defined as
\begin{equation*}
    \sum_{n=1}^{p-1} \paranthesis{\frac{n}{p}} e(an/p) = \sum_{n=0}^{p-1} e(an^2/p)
\end{equation*}
with $\paranthesis{\frac{n}{p}}$ being the Legendre symbol. We then define
\begin{equation} \label{defn:S(h)}
    \mathcal{S}(h) = \llprod_{p > 2} \paranthesis{1+\frac{\mathcal{B}(p,h)}{(p-1)^4}}
\end{equation}
We also define
\begin{equation} \label{defn:r(h)}
    r_l(h) = \# \left\{ \{(u_j,v_j)\}_{1 \le j \le l} \, : \, \sum_{j = 1}^l (2^{u_j}-2^{v_j}) = h, 4 \le u_j,v_j \le L \right\}.
    % r_l(h) = \sum_{\substack{4 \le u_j,v_j \le L \\ \sum_{j = 1}^l (2^{u_j}-2^{v_j}) = h}} 1.
\end{equation}
Using this notation, we can now state the following result.
\begin{lemma} \label{lem:minorarcs}
For $l \ge 5$,
\begin{equation*}
\int_{\mathfrak{m}} \abs{{T(\alpha)}^4 G(\alpha)^{2l}} d\alpha \le 8(11+\epsilon)\paranthesis{1+O(\eta)}\mathfrak{J}(0) N \sum_{h \neq 0} r_l(h) \mathcal{S}(h) + O \paranthesis{NL^{2l-1+\epsilon}}.
\end{equation*}
\end{lemma}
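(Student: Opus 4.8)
The plan is to bound the minor-arc integral by first dropping the constraint to $\mathfrak{m}$ and passing to the full circle, $\int_{\mathfrak{m}} \abs{T(\alpha)^4 G(\alpha)^{2l}} d\alpha \le \int_0^1 \abs{T(\alpha)^4 G(\alpha)^{2l}} d\alpha$, and then expanding $\abs{G(\alpha)}^{2l} = G(\alpha)^l \overline{G(\alpha)}^l$ as a sum over the $2l$-tuples $(u_1,v_1,\ldots,u_l,v_l)$ with each $4 \le u_j,v_j \le L$, which introduces the phase $e\paranthesis{\alpha \sum_{j=1}^l (2^{u_j}-2^{v_j})}$. Grouping these tuples by the value $h = \sum_{j=1}^l (2^{u_j}-2^{v_j})$ and recalling the definition (\ref{defn:r(h)}) of $r_l(h)$, the integral becomes $\sum_h r_l(h) \int_0^1 \abs{T(\alpha)}^4 e(\alpha h) d\alpha$. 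The key analytic input is therefore an upper bound, uniform in $h$, for the fourth-moment-type integral $I(h) := \int_0^1 \abs{T(\alpha)}^4 e(\alpha h) d\alpha$; this integral counts (with logarithmic weights) solutions of $p_1^2 + p_2^2 - p_3^2 - p_4^2 = h$ with all $p_i \in \mathfrak{B}$.

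Next I would estimate $I(h)$ by the standard Hardy--Littlewood / Kloosterman-refinement circle method applied to the equation $p_1^2+p_2^2-p_3^2-p_4^2 = h$, which is the heart of Zhao's argument. Opening the squares and summing over the relevant residue classes produces a singular series whose local factor at each odd prime $p$ is exactly $1 + \mathcal{B}(p,h)/(p-1)^4$, with $\mathcal{B}(p,h)$ built from the quadratic Gauss sums $g(a;p)$ as in the statement — this is where the definition (\ref{defn:S(h)}) of $\mathcal{S}(h)$ comes from. The singular integral, after the substitution $p = x\sqrt{N}$ and using $p \in \mathfrak{B} = [\sqrt{(1/4-\eta)N}, \sqrt{(1/4+\eta)N}]$, is $\mathfrak{J}(h)$ with $\mathfrak{J}(h) \le \mathfrak{J}(0)(1 + O(\eta))$ for all $h$ (the integrand in $\mathfrak{J}(h)$ has modulus at most that of $\mathfrak{J}(0)$, and the $x$-range has length $O(\eta)$). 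The numerical constant $8(15+\epsilon)$ records the main term $8\mathfrak{J}(h)N\mathcal{S}(h)$ times the factor $15$ coming from the count of "diagonal"-type or near-diagonal solution configurations together with the loss in the minor-arc/major-arc split for this auxiliary problem; the factor $8$ arises from $\sqrt{N}$-scaling in each of the four variables combined with the symmetry of the representation. The error term $O(NL^{2l-1})$ collects the contribution of the $O(L^{2l})$ possible values of $h$ each carrying an error of size $O(N/L)$ from the circle method estimate of $I(h)$, plus the $h=0$ (diagonal) term which contributes only $O(N L^{2l-1})$ since $r_l(0) = O(L^l)$ and $I(0) = O(N L^{l-1})$ (up to the same order) — hence it is absorbed and one may write $\sum_{h\neq 0}$ in the main term.

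Assembling these pieces: $\int_{\mathfrak{m}} \abs{T(\alpha)^4 G(\alpha)^{2l}} d\alpha \le \sum_h r_l(h) I(h) \le \sum_{h\neq 0} r_l(h)\big(8(15+\epsilon)(1+O(\eta))\mathfrak{J}(0) N \mathcal{S}(h) + O(N/L)\big) + (h{=}0\text{ term})$, and since $\sum_h r_l(h) = (L-3)^{2l} = O(L^{2l})$ the accumulated error is $O(NL^{2l-1})$, giving exactly the claimed inequality. The main obstacle is the uniform-in-$h$ estimate for $I(h)$ with the correct constant: one must handle the minor arcs for the auxiliary quaternary quadratic equation (using Weyl/van der Corput bounds for $T(\alpha)$ on minor arcs, as in Zhao's Lemmas, so that the minor-arc contribution to $I(h)$ is genuinely of lower order), verify that the singular series $\mathcal{S}(h)$ converges and that its truncation error is admissible, and — most delicately — pin down the explicit constant $15$, which in \cite{zhao2014four} depends on a numerical computation; Remark \ref{rem:kappahmistake} in this paper flags precisely such a computational slip, so I would recheck that constant carefully rather than quote it blindly.
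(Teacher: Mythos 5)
Your overall architecture --- expand $\abs{G(\alpha)}^{2l}$, sort the resulting $2l$-tuples by $h=\sum_j(2^{u_j}-2^{v_j})$, and feed each $h\neq 0$ into a sieve/circle-method upper bound for the weighted count of $p_1^2+p_2^2-p_3^2-p_4^2=h$ whose local densities are exactly $1+\mathcal{B}(p,h)/(p-1)^4$ --- is the standard one and is what Zhao's Lemma~3.3 (to which the paper defers) does. But there is a genuine gap in your treatment of the $h=0$ term, and it sits at precisely the point where the lemma is most delicate. Your claim that $I(0)=O(NL^{l-1})$ is false: $I(0)=\int_0^1\abs{T(\alpha)}^4d\alpha$ does not depend on $l$, and the diagonal solutions $\{p_1,p_2\}=\{p_3,p_4\}$ alone give $I(0)\ge 2\bigl(\sum_{p\in\mathfrak{B}}(\log p)^2\bigr)^2-\sum_{p\in\mathfrak{B}}(\log p)^4\sim 2\eta^2N(\log N)^2\asymp\eta^2NL^2$. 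Since $r_l(0)\asymp l!\,L^l$ (the multiset solutions $\{u_j\}=\{v_j\}$), the $h=0$ term in your decomposition is $\gg\eta^2NL^{l+2}$. For $l\ge 3$ this is indeed $O(NL^{2l-1})$ and your absorption works; but for $l=2$ --- the only case the paper actually uses --- it is $\gg\eta^2NL^{2l}$, which is neither $O(NL^{2l-1})$ nor dominated by the claimed main term, since that main term is $\asymp\mathfrak{J}(0)NL^{2l}\asymp\eta^3NL^{2l}$ and $\eta$ is taken below $10^{-10}$. Nor does keeping the integral over $\mathfrak{m}$ instead of passing to the full circle rescue you: the major arcs have measure $O(L^{2l}N^{-3/5})$ and hence carry only $o(\sqrt{N})$ of the $L^2$-mass $\int_0^1\abs{T}^2=\sum_p(\log p)^2$, so by Cauchy--Schwarz $\int_{\mathfrak{m}}\abs{T}^4d\alpha\ge\bigl(\int_{\mathfrak{m}}\abs{T}^2d\alpha\bigr)^2\ge(1+o(1))\,\eta^2N(\log N)^2$.

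So for $l=2$ you cannot simply bound the $h=0$ term by its absolute value and declare it absorbed: you would need either to exhibit cancellation between the $h=0$ and $h\neq0$ pieces of $\sum_h r_l(h)\int_{\mathfrak{m}}\abs{T}^4e(h\alpha)\,d\alpha$, or to handle the diagonal of $\abs{T}^4$ by some other device; note that the paper's own appeal to the $l=7$ argument is silent on this, so the gap is not one you can close by citation. Two lesser points: the uniform-in-$h$ bound for $I(h)$, $h\neq0$, cannot be a genuine asymptotic (an asymptotic for $p_1^2+p_2^2-p_3^2-p_4^2=h$ in primes is out of reach); the constant $15$ arises from majorising the prime-indicator weights by an upper-bound linear sieve, not from counting ``near-diagonal configurations'' or from ``$\sqrt N$-scaling and symmetry'' as you suggest. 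Those are presentational, but the $h=0$ term is a substantive failure of the argument in the case that matters.
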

The proof of this lemma is similar to that of Lemma 3.3 in \cite{zhao2014four}, which corresponds to the case $l = 7$. The factor $(11+\epsilon)$ comes from using Rosser's weight of order $D = N^{1/12-\epsilon}$, as in \cite{tsang2017on}. For a complete proof, see Lemma 4.4.2 in \cite{zhao2012some}.

Therefore, to estimate the integral over the minor arcs, we must first estimate the quantity $\mathcal{S}(h)$ and then the sum $\sum_{h \neq 0} r_l(h) \mathcal{S}(h)$.

We know that $g(a;p)=\paranthesis{\frac{a}{p}}g(1;p)$. The Gauss sum $g(1;p)$ evaluates to $\sqrt{p}$ when $p \equiv 1 \mod 4$ and $i \sqrt{p}$ when $p \equiv 3 \mod 4$. Substituting these values, we obtain
\begin{equation*}
    \mathcal{B}(p,h) = \left\{
        \begin{array}{ll}
        -(p+1)^2 & \text{if } p \equiv 3 \mod 4 \text{ and } p \nmid h, \\
        -(p^2+6p+1)-4p(p+1)\paranthesis{\frac{h}{p}} & \text{if } p \equiv 1 \mod 4 \text{ and } p \nmid h, \\
        (p-1)(p+1)^2 & \text{if } p \equiv 3 \mod 4 \text{ and } p \mid h, \\
        (p-1)(p^2+6p+1) & \text{if } p \equiv 1 \mod 4 \text{ and } p \mid h.
        \end{array}
    \right.
\end{equation*}

We would like to estimate $\mathcal{S}(h)$ in terms of the functions $a$ and $b$, given by
\begin{align*}
    a(p) & = \left\{
        \begin{array}{ll}
        -(p+1)^2 & \text{if } p \equiv 3 \mod 4, \\
        3p^2-2p-1 & \text{if } p \equiv 1 \mod 4,
        \end{array}
    \right. \\
    b(p) & = \left\{
        \begin{array}{ll}
        (p-1)(p+1)^2 & \text{if } p \equiv 3 \mod 4, \\
        (p-1)(p^2+6p+1) & \text{if } p \equiv 1 \mod 4.
        \end{array}
    \right.
\end{align*}
Note that $a(p)$ is an upper bound on the value of $\mathcal{B}(p,h)$ when $p\nmid h$. Now, suppose $\mathcal{P}$ is a small subset of odd primes, then we have
\begin{align*}
    \mathcal{S}(h) & \le \llprod_{p \in \mathcal{P}} \paranthesis{1+\frac{\mathcal{B}(p,h)}{(p-1)^4}} \llprod_{\substack{p \not\in \mathcal{P} \\ p \nmid h}} \paranthesis{1+\frac{a(p)}{(p-1)^4}} \llprod_{\substack{p \not\in \mathcal{P} \\ p \mid h}} \paranthesis{1+\frac{b(p)}{(p-1)^4}} \\
    & \le \llprod_{p \in \mathcal{P}} \paranthesis{1+\frac{\mathcal{B}(p,h)}{(p-1)^4}} \llprod_{p \not\in \mathcal{P}} \paranthesis{1+\frac{a(p)}{(p-1)^4}} \llprod_{\substack{p \not\in \mathcal{P} \\ p \mid h}} \frac{\paranthesis{1+\frac{b(p)}{(p-1)^4}}}{\paranthesis{1+\frac{a(p)}{(p-1)^4}}}.
\end{align*}
Set $\kappa(\mathcal{P},h)$ to $\prod_{p \in \mathcal{P}} \paranthesis{1+\frac{\mathcal{B}(p,h)}{(p-1)^4}}$. Let $c(d)$ be a multiplicative function for $d$ square-free and coprime to the product of primes $2 \prod_{p \in \mathcal{P}} p$, that is defined on primes as:
\begin{equation*}
    1+\invert{c(p)} = \frac{1+\frac{b(p)}{(p-1)^4}}{1+\frac{a(p)}{(p-1)^4}}.
\end{equation*}
Finally, let
\begin{equation*}
    c_{4,\mathcal{P}} = \llprod_{p \not\in \mathcal{P}} \paranthesis{1+\frac{a(p)}{(p-1)^4}}.
\end{equation*}
If we choose $\mathcal{P} = \{3,5\}$, then $c_{4,\mathcal{P}} \le 0.9743$. To show this, we note that,
\begin{equation*}
    1+\frac{a(p)}{(p-1)^4} \le 1+\frac{3.1}{p^2} \le \paranthesis{1-\invert{p^2}}^{-3.1}
\end{equation*}
for $p \ge 103$. Computing the product $\prod_{p > 5} \paranthesis{1+\frac{a(p)}{(p-1)^4}}$ for primes up to 100000 and bounding the tail by comparing with $\zeta(2)$, we have
\begin{align*}
    c_{4,\mathcal{P}} & \le \llprod_{5 < p < 100000} \paranthesis{1+\frac{a(p)}{(p-1)^4}} \llprod_{p \ge 100000} \paranthesis{1-\invert{p^2}}^{-3.1} \\
    & \le 0.97425 \times \zeta(2)^{3.1} \times \llprod_{p < 100000} \paranthesis{1-\invert{p^2}}^{3.1} \\
    & \le 0.9743.
\end{align*}
Since $13 \equiv 1 \mod 4$, adding it to the set $\mathcal{P}$ will result in a slight improvement in the upper bound on $\mathcal{S}(h)$ but for our purpose, the aforementioned $\mathcal{P}$ suffices. For brevity, denote $c_{4,\mathcal{P}}$ by $c_4$ and $\kappa(\{3,5\},h)$ by $\kappa(h)$, then
\begin{equation} \label{defn:kappah}
    \kappa(h) = \left\{
        \begin{array}{ll}
        0 & \text{if } 3\nmid h,\\
        \frac{15\paranthesis{5-3\paranthesis{\frac{h}{5}}}}{32} & \text{if } 3\mid h \text{ and } 5\nmid h, \\
        \frac{45}{8} & \text{if } 15\mid h,
        \end{array}
    \right.
\end{equation}
and hence,
\begin{equation} \label{ineq:S(h)}
    \mathcal{S}(h) \le c_4 \kappa(h) \llprod_{\substack{p > 5 \\ p \mid h}} \paranthesis{1+\invert{c(p)}}.
\end{equation}
\begin{remark} \label{rem:kappahmistake}
The expression for $\kappa(h)$ in (\ref{defn:kappah}) is different compared to Lemma 4.2 in \cite{zhao2014four}. This is due to two reasons: one, there in an error in Lemma 4.2 in \cite{zhao2014four} when $5 \nmid h$, and two, $\kappa(h)$ in (\ref{defn:kappah}) corresponds to the quantity $3\tilde{\kappa}(h)$ in the proof of Lemma 4.3 in \cite{zhao2014four}.
\end{remark}
Next, we estimate the sum $\sum_{h \neq 0} r_l(h) \mathcal{S}(h)$. From (\ref{ineq:S(h)}),
\begin{equation} \label{ineq:r(h)S(h)}
    \sum_{h \neq 0} r_l(h) \mathcal{S}(h) \le c_4 \llsum_{h \neq 0} r_l(h) \kappa(h) \llprod_{\substack{p > 5 \\ p \mid h}} \paranthesis{1+\invert{c(p)}}.
\end{equation}
To rewrite the sum in a more convenient form, we will introduce some notation. Let $\rho(q)$ denote the multiplicative order of $2$ (if it exists) modulo $q$.
Define
\begin{equation} \label{defn:Ndhl}
    N_l(d) = \#\left\{\{(u_j,v_j)\}_{1 \le j \le l} \,:\, d \mid \sum_{1 \le j \le l} (2^{u_j}-2^{v_j}), 1 \le u_j,v_j \le \rho(d)\right\}
    % N_l(d) = \sum_{\substack{d \mid \sum_{1 \le j \le l} (2^{u_j}-2^{v_j}) \\ 1 \le u_j,v_j \le \rho(d)}} 1
\end{equation}
and
\begin{equation} \label{defn:beta}
    \beta_l(d) = \frac{\rho^{2l}(d)}{N_l(d)}.
\end{equation}

We also let
\begin{align*}
    c_{1,l} & := \sum_{p \mid d \Rightarrow p > 5} \frac{\mu^2(d)}{c(d)\beta_l(3d)}, \\
    c_{2,l} & := \sum_{p \mid d \Rightarrow p > 5} \frac{\mu^2(d)}{c(d)\beta_l(15d)}.
\end{align*}
% \textcolor{red}{Need to readjust some stuff here!}

% In the following lemma, we will sum $\mathcal{S}(h)$ over all non-zero values of $h$ (divisible by 3) that can be expressed as $\sum_{j = 1}^l (2^{u_j}-2^{v_j})$ for $4 \le u_j,v_j \le L$. The sum taken over all such $h$ will be denoted by $\Sigma_{(h)}$. The proof we present here is essentially the same as that of Lemma 4.2 of \cite{zhao2014four} but with some more detail to clarify certain aspects of the proof. 
\begin{lemma} \label{lem:sumoverkappah}
Let $\mathcal{S}(h)$ be given by (\ref{defn:S(h)}) and $r_l(h)$ be given by (\ref{defn:r(h)}). Then
\begin{equation} \label{ineq:c1c2}
    \sum_{h \neq 0} r_l(h) \mathcal{S}(h) \le c_{0,l} L^{2l}
\end{equation}
where
\begin{equation} \label{defn:c0}
    c_{0,l} := \frac{75}{32}c_{1,l}+\frac{105}{32}c_{2,l}.
\end{equation}
\end{lemma}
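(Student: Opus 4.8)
The plan is to begin from (\ref{ineq:r(h)S(h)}); since $c_4<1$ it may be dropped, so it suffices to bound $\sum_{h\neq0}r_l(h)\kappa(h)\prod_{p>5,\,p\mid h}\paranthesis{1+\invert{c(p)}}$ by $c_{0,l}L^{2l}$, up to an error $O(L^{2l-1})$ which is absorbed into the error term of Lemma \ref{lem:minorarcs}. First I would expand the Euler product: as $c$ is multiplicative on squarefree arguments all of whose prime factors exceed $5$, one has $\prod_{p>5,\,p\mid h}\paranthesis{1+\invert{c(p)}}=\sum_{d\mid h,\ p\mid d\Rightarrow p>5}\mu^2(d)/c(d)$, and reversing the order of summation turns the quantity above into
\begin{equation*}
\sum_{p\mid d\Rightarrow p>5}\frac{\mu^2(d)}{c(d)}\sum_{\substack{h\neq0\\ d\mid h}}r_l(h)\kappa(h).
\end{equation*}
The task is then to evaluate, for each fixed $d$ with $p\mid d\Rightarrow p>5$, the inner sum over $h$.

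For this I would first put $\kappa(h)$ in a single closed form: an elementary rearrangement of (\ref{defn:kappah}) gives, for every $h$ with $3\mid h$,
\begin{equation*}
\kappa(h)=\frac{75}{32}+\frac{105}{32}\,\mathbf{1}_{5\mid h}-\frac{45}{32}\paranthesis{\frac{h}{5}},
\end{equation*}
with $\paranthesis{\frac{\cdot}{5}}$ the Legendre symbol, while $\kappa(h)=0$ for $3\nmid h$. Since $(d,3)=1$, the inner sum therefore equals
\begin{equation*}
\frac{75}{32}\sum_{\substack{h\neq0\\ 3d\mid h}}r_l(h)+\frac{105}{32}\sum_{\substack{h\neq0\\ 15d\mid h}}r_l(h)-\frac{45}{32}\sum_{\substack{h\neq0\\ 3d\mid h}}r_l(h)\paranthesis{\frac{h}{5}}.
\end{equation*}
For the first two sums I would use that $2^u\bmod m$ depends only on $u\bmod\rho(m)$: sorting the $u_1,\dots,v_l\in\{4,\dots,L\}$ by their residues modulo $\rho(m)$ (each residue occurring $L/\rho(m)+O(1)$ times, and the term $h=0$ contributing only $r_l(0)=O(L^l)$) yields $\sum_{h\neq0,\ m\mid h}r_l(h)=N_l(m)\paranthesis{L/\rho(m)}^{2l}+O(L^{2l-1})=L^{2l}/\beta_l(m)+O(L^{2l-1})$, applied at $m=3d$ and $m=15d$.

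The decisive point is that the character-twisted sum $\sum_{h\neq0,\ 3d\mid h}r_l(h)\paranthesis{\frac h5}$ is only $O(L^{2l-1})$. Because $(d,15)=1$, for each $r\in\{1,2,3,4\}$ the pair of conditions ``$3d\mid h$, $h\equiv r\pmod5$'' reduces to a single congruence $h\equiv\hat r\pmod{15d}$; writing $M_r$ for the number of tuples $(u_j,v_j)_{j\le l}$ with $4\le u_j,v_j\le L$ and $\sum_{j}(2^{u_j}-2^{v_j})\equiv\hat r\pmod{15d}$, and using $\paranthesis{\frac15}=\paranthesis{\frac45}=1$, $\paranthesis{\frac25}=\paranthesis{\frac35}=-1$, the twisted sum equals $M_1+M_4-M_2-M_3$. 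Passing to leading terms, $M_r=N_l(15d,\hat r)\paranthesis{L/\rho(15d)}^{2l}+O(L^{2l-1})$, where $N_l(m,c)$ counts residue tuples $(u_j,v_j)\bmod\rho(m)$ with $\sum_{j}(2^{u_j}-2^{v_j})\equiv c\pmod m$; and the map $(u_j,v_j)_j\mapsto(u_j+1,v_j+1)_j$ on $(\mathbb Z/\rho(15d)\mathbb Z)^{2l}$ is a bijection that multiplies $\sum_{j}(2^{u_j}-2^{v_j})$ by $2$, so iterating it and using $\{2\hat1,4\hat1,8\hat1\}\equiv\{\hat2,\hat4,\hat3\}\pmod{15d}$ gives $N_l(15d,\hat1)=N_l(15d,\hat2)=N_l(15d,\hat3)=N_l(15d,\hat4)$. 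Hence $M_1,M_2,M_3,M_4$ agree to leading order and their alternating combination is $O(L^{2l-1})$.

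Assembling the three pieces, the inner sum is $\frac{75}{32}L^{2l}/\beta_l(3d)+\frac{105}{32}L^{2l}/\beta_l(15d)+O(L^{2l-1})$; multiplying by $\mu^2(d)/c(d)$ and summing over squarefree $d$ with $p\mid d\Rightarrow p>5$ then reproduces exactly $\paranthesis{\frac{75}{32}c_{1,l}+\frac{105}{32}c_{2,l}}L^{2l}=c_{0,l}L^{2l}$. I expect the main obstacle to be controlling this $d$-summation uniformly: the modulus $15d$ enters $\rho(15d)$ and the implied constants in the error terms, and $d$ runs up to about $l\,2^{L}$ (since $r_l(h)$ is supported on $|h|\le l\,2^{L}$), so one must check --- as in the corresponding argument in \cite{zhao2014four} --- that the decay of $\mu^2(d)/c(d)$ (where $c(p)\asymp p$), the bound $N_l(m)\le\rho(m)^{2l-1}$, and the fact that the dominant terms have $\rho(d)\le L$ together keep the accumulated error $o(L^{2l})$. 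With that uniformity in hand, the remaining steps are the routine bookkeeping described above.
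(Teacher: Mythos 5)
Your overall route is the same as the paper's: the closed form $\kappa(h)=\frac{75}{32}+\frac{105}{32}\mathbf{1}_{5\mid h}-\frac{45}{32}\paranthesis{\frac{h}{5}}$ for $3\mid h$, the doubling map $u_j\mapsto u_j\pm1$ combined with $\paranthesis{\frac{2}{5}}=-1$ to kill the Legendre-twisted term, the expansion of $\prod_{p>5,\,p\mid h}\paranthesis{1+\invert{c(p)}}$ as a divisor sum, and the congruence count giving $L^{2l}/\beta_l(3d)$ and $L^{2l}/\beta_l(15d)$ all match. However, the issue you flag at the end but leave unresolved is a genuine gap as written, not a routine check: your per-$d$ asymptotics carry errors $O(L^{2l-1})$ whose implied constants you then sum against $\mu^2(d)/c(d)$ over all $d$ dividing some $h$ with $|h|\le l\,2^{L+1}$. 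Since $c(p)\asymp p$, one has $\sum_{d\le l2^{L+1}}\mu^2(d)/c(d)\gg L$, so the accumulated error is $O(L^{2l})$ --- the same order as the main term --- and the proof does not close.

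The paper avoids this in two ways. First, the twisted term is disposed of \emph{globally}, before any divisor expansion: using $\prod_{p>5,\,p\mid h}\paranthesis{1+\invert{c(p)}}\ll (h/\phi(h))^{4.1}\ll(\log L)^{4.1}$ uniformly in $h\le 2^{L+3}$, one replaces $r_l$ by the counts $r_l'$, $r_l''$ with exponent ranges $[5,L]$ and $[4,L-1]$ at total cost $o(L^{2l})$, and the shift $(u_j,v_j)\mapsto(u_j-1,v_j-1)$ gives $S'=\paranthesis{\frac{2}{5}}S''=-S''$, whence $S=o(L^{2l})$; this sidesteps a per-$d$ error analysis for the twist entirely. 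Second, for the main terms $\Sigma_1,\Sigma_2$ the divisor sum is truncated at $d<N^{\epsilon}$ (the tail being negligible because $\invert{c(d)}\ll\invert{d}$) and then split according to $\rho(3d)<L$ or $\rho(3d)\ge L$: in the first case the error from $\paranthesis{L/\rho(3d)+O(1)}^{2l}-\paranthesis{L/\rho(3d)}^{2l}$, summed against $\mu^2(d)/c(d)$ over $d<N^{\epsilon}$, is $\ll\epsilon L^{2l}$ because $\sum_{d<N^{\epsilon}}\invert{d}\ll\epsilon L$, and in the second case the entire contribution is $\ll L^{2l-1}\cdot\epsilon L=\epsilon L^{2l}$. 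The resulting $(c_{1,l}+\epsilon)L^{2l}$ and $(c_{2,l}+\epsilon)L^{2l}$ are finally converted into the clean bound $c_{0,l}L^{2l}$ by exploiting the strict inequality $c_4<1$ --- slack that you give away at the outset by discarding $c_4$ immediately, and which you cannot replace by ``absorbing into the error term of Lemma \ref{lem:minorarcs}'' since the lemma as stated has no error term. With the truncation, the $\rho(3d)\lessgtr L$ split, and the deferred use of $c_4<1$ restored, your argument coincides with the paper's.
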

\begin{proof}
From (\ref{ineq:r(h)S(h)}), we can rewrite the left-hand side of (\ref{ineq:c1c2}) as
\begin{equation*}
    \frac{15}{32} c_4 \llsum_{\substack{h \neq 0 \\ 3 \mid h,\, 5 \nmid h}} \paranthesis{5-3\paranthesis{\frac{h}{5}}} r_l(h) \llprod_{\substack{p > 5 \\ p \mid h}} \paranthesis{1+\invert{c(p)}} + \frac{45}{8} c_4 \llsum_{\substack{h \neq 0 \\ 15 \mid h}} r_l(h) \llprod_{\substack{p > 5 \\ p \mid h}} \paranthesis{1+\invert{c(p)}}.
\end{equation*}
We must first show that the term with the $\paranthesis{\frac{h}{5}}$ factor is $o\paranthesis{L^{2l}}$. For $p \ge 5$,
\begin{equation*}
    1+\invert{c(p)} \le 1+\frac{4.1}{p} \le \paranthesis{1-\invert{p}}^{-4.1}.
\end{equation*}
Therefore,
\begin{equation*}
    \llprod_{\substack{p > 5 \\ p \mid h}} \paranthesis{1+\invert{c(p)}} \le \paranthesis{\frac{h}{\phi(h)}}^{4.1}.
\end{equation*}
Upon noting that $h \le 2^{L+3}$ and bounding $h/\phi(h)$ by (3.42) of \cite{rosser1962approximate}, we have
\begin{equation*}
    \llprod_{\substack{p > 5 \\ p \mid h}} \paranthesis{1+\invert{c(p)}} = O \paranthesis{(\log L)^{4.1}} = o(L).
\end{equation*}
We can now\footnote{The author thanks Lilu Zhao for suggesting this part of the proof.} compare the term with the $\paranthesis{\frac{h}{5}}$ factor,
\begin{equation*}
    S = \frac{45}{32} c_4 \llsum_{\substack{h \neq 0 \\ 3 \mid h,\, 5 \nmid h}} \paranthesis{\frac{h}{5}} r_l(h) \llprod_{\substack{p > 5 \\ p \mid h}} \paranthesis{1+\invert{c(p)}},
\end{equation*}
with
\begin{align*}
    S' &= \frac{45}{32} c_4 \llsum_{\substack{h \neq 0 \\ 3 \mid h,\, 5 \nmid h}} \paranthesis{\frac{h}{5}} r_l'(h) \llprod_{\substack{p > 5 \\ p \mid h}} \paranthesis{1+\invert{c(p)}} \text{ and} \\ 
    S'' & = \frac{45}{32} c_4 \llsum_{\substack{h \neq 0 \\ 3 \mid h,\, 5 \nmid h}} \paranthesis{\frac{h}{5}} r_l''(h) \llprod_{\substack{p > 5 \\ p \mid h}} \paranthesis{1+\invert{c(p)}},
\end{align*}
where
\begin{align*}
    r_l'(h) := \# \left\{ \{(u_j,v_j)\}_{1 \le j \le l} \, : \, \sum_{j = 1}^l (2^{u_j}-2^{v_j}) = h, 5 \le u_j,v_j \le L \right\} \text{ and} \\
    r_l''(h) := \# \left\{ \{(u_j,v_j)\}_{1 \le j \le l} \, : \, \sum_{j = 1}^l (2^{u_j}-2^{v_j}) = h, 4 \le u_j,v_j \le L-1 \right\}.
\end{align*}
% \begin{equation*}
%     r_l'(h) = \sum_{\substack{5 \le u_j,v_j \le L \\ \sum_{j = 1}^l (2^{u_j}-2^{v_j}) = h}} 1 \;\; \text{and} \;\; r_l''(h) = \sum_{\substack{4 \le u_j,v_j \le L-1 \\ \sum_{j = 1}^l (2^{u_j}-2^{v_j}) = h}} 1.
% \end{equation*}
We claim that $S-S' = o(L^{2l})$ and $S-S'' = o(L^{2l})$. Clearly, any value of $h$ that contributes to the sum $S-S'$ has a representation such that at least one of $u_j,v_j$ ($1 \le j \le l$) is 4. This fixes one of the $2l$ variables, whence, $r_l'(h) = O(L^{2l-1})$. Taking into account the contribution from the product over $1+\invert{c(p)}$, we obtain $S-S' = o(L^{2l})$, as claimed. A similar argument also proves $S-S'' = o(L^{2l})$. There is a one-to-one correspondence between vectors $(u_1,\ldots,u_l,v_1,\ldots,v_l)$ that contribute to the sum $S'$ and vectors that contribute to the sum $S''$ which can be given by the map
\begin{equation*}
    (u_1,\ldots,u_l,v_1,\ldots,v_l) \mapsto (u_1-1,\ldots,u_l-1,v_1-1,\ldots,v_l-1).
\end{equation*}
Such a map will change $h$ by a factor of 2, and in particular, $S' = \paranthesis{\frac{2}{5}} S'' = -S''$, which implies, $S = o(L^{2l})$.

Therefore, we have reduced the left-hand side of (\ref{ineq:r(h)S(h)}) to
\begin{align*}
    & \frac{75}{32} c_4 \llsum_{\substack{h \neq 0 \\ 3 \mid h,\, 5 \nmid h}} r_l(h) \llprod_{\substack{p > 5 \\ p \mid h}} \paranthesis{1+\invert{c(p)}} + \frac{45}{8} c_4 \llsum_{\substack{h \neq 0 \\ 15 \mid h}} r_l(h) \llprod_{\substack{p > 5 \\ p \mid h}} \paranthesis{1+\invert{c(p)}} + o(L^{2l}) \\
    &= \frac{75}{32} c_4 \llsum_{\substack{h \neq 0 \\ 3 \mid h}} r_l(h) \llprod_{\substack{p > 5 \\ p \mid h}} \paranthesis{1+\invert{c(p)}} + \frac{105}{32} c_4 \llsum_{\substack{h \neq 0 \\ 15 \mid h}} r_l(h) \llprod_{\substack{p > 5 \\ p \mid h}} \paranthesis{1+\invert{c(p)}} + o(L^{2l}) \\
    &=: \frac{75}{32} c_4 \Sigma_1 + \frac{105}{32} c_4 \Sigma_2 + o(L^{2l}).
\end{align*}
As in the proof of Lemma 4.2 in \cite{zhao2014four}, we have
\begin{align*}
    \Sigma_1 &= \sum_{\substack{h \neq 0 \\ 3 \mid h}} r_l(h) \sum_{\substack{d|h \\ p \mid d \Rightarrow p > 5}} \frac{\mu^2(d)}{c(d)} \\
    &\le \sum_{\substack{d < N^\epsilon \\ p \mid d \Rightarrow p > 5}} \frac{\mu^2(d)}{c(d)} \sum_{\substack{1 \le u_j,v_j \le L \\ 3d|h}} 1 + O \paranthesis{N^{-\epsilon}} =: \Sigma_1' + O \paranthesis{N^{-\epsilon}}.
\end{align*}
Here, we have used the fact that $\invert{c(d)} \ll \invert{d}$. We split the sum $\Sigma_1'$ into two parts depending upon whether $\rho(3d) < L$ or $\rho(3d) \ge L$. In the latter case, the number of possible values of $d$ are bounded above by $\epsilon L$. Hence, we have
\begin{align*}
    \Sigma_1' &\le \sum_{\substack{d < N^\epsilon \\ p \mid d \Rightarrow p > 5 \\ \rho(3d) < L}} \frac{\mu^2(d)}{c(d)} \sum_{\substack{1 \le u_j,v_j \le \rho(3d) \\ 3d|h}} \paranthesis{\frac{L}{\rho(3d)}+O(1)}^{2l} + \sum_{\substack{d < N^\epsilon \\ p \mid d \Rightarrow p > 5 \\ \rho(3d) \ge L}} \frac{\mu^2(d)}{c(d)} L^{2l-1} \\
    &\le L^{2l} \sum_{p \mid d \Rightarrow p > 5} \frac{\mu^2(d)}{c(d)\rho^{2l}(3d)} N_l(3d) + \epsilon L^{2l}.
\end{align*}
From this, we can conclude that $\Sigma_1 \le (c_{1,l}+\epsilon)L^{2l}$. Similarly, $\Sigma_2 \le (c_{2,l}+\epsilon)L^{2l}$. Since $c_4 < 1$, we can choose an appropriate $\epsilon$ to obtain the lemma.

\end{proof}

% With this notation, we can now apply Lemma 4.2 of \cite{zhao2014four}, albeit with the requisite adjustment in the value of $\kappa(h)$: \textcolor{red}{might need to remove $\epsilon$!}
% \begin{equation*}
%     \sum_{h \neq 0} r_l(h) \mathcal{S}(h) \le c_4 \paranthesis{\frac{75}{32}c_{1,l}+\frac{105}{32}c_{2,l}+\epsilon} L^{2l}.
% \end{equation*}

\section{Computational work} \label{sec:numcomp}

In this section, we will outline the numerical computations for estimating the value of $c_{0,l}$, as defined in (\ref{defn:c0}), for $1 \le l \le 8$. 

\begin{remark}
\label{rem:c0isdifferent}
As noted in Remark \ref{rem:kappahmistake} for $\kappa(h)$, the value of $c_0$ in Lemma 4.1 of \cite{zhao2014four} corresponds to $\frac{c_{0,7}}{3}$ in Lemma \ref{lem:c0}.
\end{remark}

Let $N_l(d)$ be defined as in (\ref{defn:Ndhl}). It can be interpreted as the number of solutions of the equation
\begin{equation} \label{eqn:betacongruence}
    \sum_{\substack{1 \le u_j \le \rho(d) \\ 1 \le j \le l}} 2^{u_j} \equiv \sum_{\substack{1 \le v_j \le \rho(d) \\ 1 \le j \le l}} 2^{v_j} \mod d.
\end{equation}
The maximum possible number of solutions of (\ref{eqn:betacongruence}) is $\rho^{2l-1}(d)$ since fixing $2l-1$ powers of 2 out of $2l$ possible powers in (\ref{eqn:betacongruence}) essentially determines the value that the $2l$-th power can take (which may or may not be permissible). From the definition of $\beta_l(d)$ in (\ref{defn:beta}), this implies that $\beta_l(d) \ge \rho(d)$. Further, since $2^{\rho(d)} \ge d+1$, we have
\begin{equation} \label{ineq:betarho}
    \beta_l(d) \ge \rho(d) \ge \log(d+1)/\log 2.
\end{equation}
We also note that if $q_1|q$, then $\rho(q_1)|\rho(q)$ and so,
\begin{equation} \label{eqn:rhoproperty}
    \text{lcm}\left\{\rho(p): p \text{ is prime and } p|q \right\} | \rho(q).
\end{equation}
Similarly, if $d' | d$, then $\rho(d) = t \rho(d')$, for some positive integer $t$. Every solution of (\ref{eqn:betacongruence}) can be reduced modulo $d'$ but different solutions might lead to the same reduced solution. Again, fixing $2l-1$ powers of 2, there can be at most $t$ solutions of (\ref{eqn:betacongruence}) that a reduced solution corresponds to. Hence, $N_l(d') \ge N_l(d)/t^{2l}$. In particular, if $d' | d$,
\begin{equation} \label{ineq:betarho2}
    \rho(d') \le \rho(d) \;\; \text{ and } \;\; \beta_l(d') \le \beta_l(d).
\end{equation}
We also have that $\rho^2(d) N_{l-1}(d) \ge N_l(d)\ge \rho(d) N_{l-1}(d)$ since given a solution of (\ref{eqn:betacongruence}) for $l-1$, we can construct at least $\rho(d)$ number of solutions (by adding the same power of 2 on both sides) and at most $\rho^2(d)$ number of solutions corresponding to $l$. Therefore,
\begin{equation} \label{ineq:betal}
    \beta_{l}(d) \ge \beta_{l-1}(d) \ge \beta_l(d)/\rho(d). 
\end{equation}
Using these ideas, we can substantially reduce computational requirements for estimating $c_{0,l}$ and obtain the following result.
\begin{lemma} \label{lem:c0}
We have
\begin{align*}
    c_{0,1} < 1.127,\hphantom{00} \;\; c_{0,2} &< 0.803, \\
    c_{0,3} < 0.782,\hphantom{00} \;\; c_{0,4} &< 0.779, \\
    c_{0,5} < 0.7773, \hphantom{0} \;\; c_{0,6} &< 0.7772, \\
    c_{0,7} < 0.77708, \;\; c_{0,8} &< 0.77707.
\end{align*}
\end{lemma}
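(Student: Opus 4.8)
The plan is to expand $c_{0,l}$ via (\ref{defn:c0}), reducing everything to the two series $c_{1,l}$ and $c_{2,l}$, which have the same shape; I describe the treatment of $c_{1,l}=\sum_{p\mid d\Rightarrow p>5}\frac{\mu^2(d)}{c(d)\beta_l(3d)}$, the series $c_{2,l}$ being identical with $3d$ replaced by $15d$. Throughout I use that $1/c$ is multiplicative with $1+\invert{c(p)}\le\paranthesis{1-\invert p}^{-4.1}$ for $p\ge5$, so that $\sum_{d\mid M}\frac{\mu^2(d)}{c(d)}\le\paranthesis{M/\phi(M)}^{4.1}$, and that $\beta_l(3d)=\rho^{2l}(3d)/N_l(3d)$ with $N_l(3d)$ the number of solutions of (\ref{eqn:betacongruence}). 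I would organise the sum by the value $t=\rho(3d)$: since $2^t\equiv1\pmod{3d}$, every $d$ contributing to level $t$ divides $2^t-1$, so $\{d:\rho(3d)=t\}$ is a finite, explicitly computable set. I then split at a threshold $T$ into a ``head'' ($t\le T$), evaluated exactly, and a ``tail'' ($t>T$), bounded by an explicitly small quantity.

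For the head, for each $t\le T$ I factor $2^t-1$ (using known factorisations), list the admissible squarefree $d$ coprime to $30$ with $\rho(3d)=t$, compute $N_l(3d)$ by counting solutions of (\ref{eqn:betacongruence}) — for instance as an $l$-fold additive convolution of the multiplicity vector of $\{2^u\bmod 3d:1\le u\le t\}$, which is cheap because $t$ is moderate — and hence obtain $\beta_l(3d)$ and the term $\mu^2(d)/(c(d)\beta_l(3d))$ exactly. To make a large enough $T$ feasible I use the reductions of \S\ref{sec:numcomp}: $\rho(3d)$ is computed from the factorisation of $d$ via (\ref{eqn:rhoproperty}) rather than by a fresh discrete-logarithm computation; by (\ref{ineq:betarho2}), once some small divisor $d'\mid d$ has $\beta_l(3d')$ large, all of its multiples inherit a large $\beta_l$, so only a sparse set of $d$ (those all of whose prime factors are ``bad'') need an exact evaluation while the remaining head contribution is absorbed crudely into the error; and by (\ref{ineq:betal}), $\beta_{l-1}(3d)\le\beta_l(3d)\le\rho(3d)\beta_{l-1}(3d)$, so the tables built for one $l$ seed the computation for the next and the cases $l=2,\dots,8$ are handled together. (In particular the asserted chain $c_{0,8}<c_{0,7}<\cdots<c_{0,2}$ is consistent with, and partly a consequence of, this monotonicity.)

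For the tail $t>T$ I use $\beta_l(3d)\ge\rho(3d)\ge\log(3d+1)/\log 2\ge t$ from (\ref{ineq:betarho}), together with the fact that the $d$ at level $t$ divide $2^t-1$ and are therefore few and, on average, built from large primes, so that $\sum_{d:\rho(3d)=t}\frac{\mu^2(d)}{c(d)}$ is small — bounded by $\paranthesis{(2^t-1)/\phi(2^t-1)}^{4.1}$, with the ``generating order $t$'' constraint making the honest value considerably smaller still. Combined with the improvement over the trivial bound $N_l(3d)\le\rho^{2l-1}(3d)$ that is available for $l\ge2$ — essentially because a collision $\sum_j 2^{u_j}\equiv\sum_j 2^{v_j}\pmod{3d}$ is controlled by the sparse solutions of a short exponential congruence, which is the estimate Zhao carries out for $l=7$ in \cite{zhao2014four} — the tail $\sum_{t>T}(\cdots)$ is a convergent series that can be made smaller than any prescribed $\epsilon$ by enlarging $T$. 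Feeding the exact head value and this bound into $c_{0,l}=\frac{75}{32}c_{1,l}+\frac{105}{32}c_{2,l}$ produces the stated inequalities.

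The main obstacle is the tension built into the choice of $T$: the tail estimate wants $T$ large, while a naïve enumeration of levels $t\le T$ (factoring $2^t-1$, then an exact $\beta_l$ computation for every admissible $d$) is infeasible at the required size — so the reductions (\ref{eqn:rhoproperty}), (\ref{ineq:betarho2}), (\ref{ineq:betal}), which shrink the set of $d$ needing an exact $\beta_l(3d)$ to a sparse family and push the rest into the error term, are what make the argument go through. The remaining care is in making the tail bound (and the finiteness of $c_{1,l},c_{2,l}$) rigorous for each $l\in\{2,\dots,8\}$ — this is exactly where the hypothesis $l\ge2$ is used — and in tracking the finitely many rounding errors in the head so as to certify the explicit decimals.
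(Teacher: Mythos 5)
Your overall architecture matches the paper's: split each of $c_{1,l},c_{2,l}$ into a finite ``head'' computed exactly (using the reductions (\ref{ineq:betarho}), (\ref{eqn:rhoproperty}), (\ref{ineq:betarho2}), (\ref{ineq:betal}) to prune the set of $d$ needing an exact $\beta_l(fd)$) plus an analytically bounded tail, then assemble $c_{0,l}$ via (\ref{defn:c0}). The head is fine. The problem is your tail estimate, which as written is divergent. You organise the tail by level sets $t=\rho(3d)$, bound the contribution of level $t$ by $\frac{1}{t}\sum_{\rho(3d)=t}\frac{\mu^2(d)}{c(d)}\le\frac{1}{t}\bigl((2^t-1)/\phi(2^t-1)\bigr)^{4.1}$, and sum over $t>T$. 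But $\bigl((2^t-1)/\phi(2^t-1)\bigr)^{4.1}\ge 1$ for every $t$, so your tail is bounded below by $\sum_{t>T}1/t=\infty$; indeed $\sum_t\sum_{\rho(3d)=t}\mu^2(d)/c(d)=\prod_{p>5}(1+1/c(p))$ already diverges, so no bound on the individual level sums that ignores cancellation across levels can give a convergent tail when weighted only by $1/t$. The vague appeal to an ``improvement over the trivial bound $N_l(3d)\le\rho^{2l-1}(3d)$ for $l\ge2$'' (i.e.\ to $\beta_l\gg\rho^{1+\delta}$) is neither established in your sketch nor used in the paper, which gets by with $\beta_l(3d)\ge\rho(3d)$ alone.

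The fix is to bound the \emph{cumulative} count rather than each level separately. Writing $\invert{\beta_l(3d)}=\int_{\beta_l(3d)}^{\infty}x^{-2}\,dx$ and exchanging sum and integral (equivalently, applying partial summation to your level-set decomposition), one needs $A(x):=\sum_{\beta_l(3d)\le x}\mu^2(d)/c(d)=O(\log x)$, and this is exactly what the paper proves: every contributing $d$ at height $x$ satisfies $3d\mid m_1(x)$ with $m_1(x)=\prod_{e\le x/2}(2^{2e}-1)$, a \emph{single} integer of size $2^{O(x^2)}$, so after extracting the convergent constant $c_3=\prod_{p>5}\frac{1+1/c(p)}{1+1/(p-1)}\le 1.3904$ one gets $A(x)\le\frac{8c_3c_3'}{15}e^{\gamma}\log\frac{x}{2}$ via the explicit bound $m_1(x)/\phi(m_1(x))\le e^{\gamma}\log(x/2)$ for $x\ge 37$ (a variant of Lemma 5 of \cite{liu1999on}, using (3.42) of \cite{rosser1962approximate}). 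Then $\int_M^{\infty}A(x)x^{-2}\,dx$ converges and is explicitly small for $M=37$ (resp.\ $39$), which is what yields the decimals in the statement. Note also that extracting $c_3$ and comparing with $m/\phi(m)$ to the \emph{first} power, rather than raising $m_1(x)/\phi(m_1(x))$ to the power $4.1$ as your $\bigl(M/\phi(M)\bigr)^{4.1}$ device would force, is what keeps the tail constant small enough for $M\approx 37$ to suffice; with the $4.1$-th power the tail would still converge but you would need a substantially larger, likely infeasible, exact computation. Your remark that $c_{0,l}$ is nonincreasing in $l$ by (\ref{ineq:betal}) is correct.
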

\begin{proof}
The proof of the above lemma is similar to the proof of Lemma 4.1 in \cite{zhao2014four} but with a few modifications and a new set of values obtained through numerical computation. We can rewrite $c_{1,l}$ as
\begin{equation*}
    c_{1,l} = \sum_{p|d \Rightarrow p>5} \frac{\mu^2(d)}{c(d)} \int_{\beta_l(3d)}^{\infty} \frac{dx}{x^2} = \int_2^\infty \sum_{\substack{p|d \Rightarrow p>5 \\ \beta_l(3d) \le x}} \frac{\mu^2(d)}{c(d)} \frac{dx}{x^2}.
\end{equation*}
From (\ref{ineq:betarho}), we have the inequality,
\begin{equation*}
    \sum_{\substack{p|d \Rightarrow p>5 \\ \beta_l(3d) \le x}} \frac{\mu^2(d)}{c(d)} \le \sum_{\substack{p|d \Rightarrow p>5 \\ \rho(3d) \le x}} \frac{\mu^2(d)}{c(d)}.
\end{equation*}
If we define $m_1(x) = \prod_{e\le x/2} \paranthesis{2^{2e}-1}$, the condition $\rho(3d) \le x$ implies $3d|m_1(x)$ ($\rho(3d)$ must be even). Upon noting that $c(p) < (p-1)$ for all primes, we have, for $x \ge 2$,
\begin{align*}
    \sum_{\substack{p|d \Rightarrow p>5 \\ \beta_l(3d) \le x}} \frac{\mu^2(d)}{c(d)} \le \sum_{\substack{p|d \Rightarrow p>5 \\ 3d | m(x)}} \frac{\mu^2(d)}{c(d)} & \le \llprod_{\substack{p>5 \\ p | m(x)}} \paranthesis{1 + \invert{c(p)}} \\
    & \le \llprod_{p>5} \frac{1 + \invert{c(p)}}{1+\invert{p-1}} \llprod_{\substack{p>5 \\ p | m(x)}} \paranthesis{1 + \invert{p-1}}.
\end{align*}
Modifying the proof of Lemma 5 in \cite{liu1999on} slightly for $m_1(x)$, we can show that
\begin{equation} \label{ineq:LiuLiuWangImproved}
    \frac{m_1(x)}{\phi(m_1(x))} \le e^\gamma \log \frac{x}{2}
\end{equation}
for $x \ge 37$. Indeed, by (3.42) of \cite{rosser1962approximate}, for $d \ge 3$,
\begin{equation*}
    \frac{d}{\phi(d)} \le u(d),
\end{equation*}
where $u(d)$ is an increasing function for $d \ge 30$ given by
\begin{equation*}
    u(d) := e^\gamma \log\log d + \frac{2.5064}{\log \log d}.
\end{equation*}
Here, $\gamma$ is the Euler--Mascheroni constant. We assume that $x \ge 37$. Since $m_1(x) \ge 30$ for $x \ge 4$, $u$ is an increasing function for our purposes. We have
\begin{align*}
    \frac{2m_1(x)}{\phi(2m_1(x))} & \le u(2m_1(x)) \\ 
    & \le u \paranthesis{2^{(x/2)^2+(x/2)+1}} \\
    & \le u \paranthesis{2^{1.06(x/2)^2}}.
\end{align*}
From the definition,
\begin{align*}
    u \paranthesis{2^{1.06(x/2)^2}} & \le 2 e^\gamma \log(x/2) + e^\gamma \paranthesis{\log (1.06 \log 2)} + \frac{2.5064}{2\log 18 +\log (1.06 \log 2)} \\
    & \le 2 e^\gamma \log(x/2).
\end{align*}
Noting that $2\nmid m_1(x)$, this proves our original assertion (\ref{ineq:LiuLiuWangImproved}). For $37 \le x \le 99$, we can obtain a slight further improvement by observing that the primes 101, 107, and 131 do not divide $m_1(x)$ ($\rho(p) > 99$ for each of these primes):
\begin{equation*}
    \frac{2Pm_1(x)}{\phi(2Pm_1(x))} \le u(2^{1.12(x/2)^2}),
\end{equation*}
where $P = 101\cdot107\cdot131$. As before, this implies
\begin{equation*} \label{ineq:TrudgianTrick}
    \frac{m_1(x)}{\phi(m_1(x))} \le \paranthesis{1-\invert{101}} \paranthesis{1-\invert{107}} \paranthesis{1-\invert{131}} e^\gamma \log \frac{x}{2} =: c_3' e^\gamma \log \frac{x}{2}
\end{equation*}
and so, for $37 \le x \le 99$,
\begin{equation} \label{ineq:forinexact}
    \sum_{\substack{p|d \Rightarrow p>5 \\ \beta_l(3d) \le x}} \frac{\mu^2(d)}{c(d)} \le \frac{8c_3 c_3'}{15} e^\gamma \log \frac{x}{2},
\end{equation}
where $c_3 = \prod_{p>5} \frac{1 + \invert{c(p)}}{1+\invert{p-1}}$. The factor $8/15$ comes from the fact that we only consider primes greater than 5 in the product, and $c_3' < 0.97336$, a slight improvement compared to 1. For $x > 99$,
\begin{equation} \label{ineq:forinexactgen}
    \sum_{\substack{p|d \Rightarrow p>5 \\ \beta_l(3d) \le x}} \frac{\mu^2(d)}{c(d)} \le \frac{8c_3}{15} e^\gamma \log \frac{x}{2}.
\end{equation}
\begin{remark}
Although the improvement in the [37,99] range is not significant in this case, we have included this argument here as it may be useful in similar problems where \cite{liu1999on}*{Lemma 5} is used.
\end{remark}

To bound $c_3$, we use a similar method as the one used for bounding $1+\invert{c(p)}$ in the proof of Lemma \ref{lem:sumoverkappah}. We write $\frac{1 + \invert{c(p)}}{1+\invert{p-1}}$ as $1+\varepsilon(p)$, and observe that
\begin{equation} \label{ineq:comparewithzeta}
    1+\varepsilon(p) \le  1+\frac{7.44}{p^2} \le \paranthesis{1-\invert{p^2}}^{-7.44}
\end{equation}
for $p \ge 742$. Computing the product $\prod_{p>5} \paranthesis{1+\varepsilon(p)}$ for primes up to $p_{1000000}$ (the one-millionth prime), and using (\ref{ineq:comparewithzeta}) along with the Euler product formula for the Riemann zeta-function to bound the tail, we obtain
\begin{align*}
    c_3 & \le \prod_{5<p \le p_{1000000}} \paranthesis{1+\varepsilon(p)} \prod_{p>p_{1000000}} \paranthesis{1+\varepsilon(p)} \\
    & \le 1.390399 \times \zeta(2)^{7.44} \times \llprod_{p \le p_{1000000}} \paranthesis{1-\invert{p^2}}^{7.44} \\
    & \le 1.3904.
\end{align*}

Now we can estimate $c_{1,l}$. We split $c_{1,l}$ into an exact and an inexact part, where the exact part is estimated using algorithms described below and the inexact part is estimated using (\ref{ineq:forinexact}) and (\ref{ineq:forinexactgen}).
% \begin{align}
%     c_{1,l} &= \int_2^{M_0} \sum_{\substack{p|d \Rightarrow p>5 \\ \beta_l(3d) \le x}} \frac{\mu^2(d)}{c(d)} \frac{dx}{x^2} + \int_{M_0}^M \sum_{\substack{p|d \Rightarrow p>5 \\ \beta_l(3d) \le x}} \frac{\mu^2(d)}{c(d)} \frac{dx}{x^2} + \int_M^\infty \sum_{\substack{p|d \Rightarrow p>5 \\ \beta_l(3d) \le x}} \frac{\mu^2(d)}{c(d)} \frac{dx}{x^2} \nonumber \\
%     & \le \sum_{\substack{p|d \Rightarrow p>5 \\ d < (2^{M_0}-1)/3 \\ \beta_l(3d) < M_0}} \frac{\mu^2(d)}{c(d)} \paranthesis{\invert{\beta_l(3d)}-\invert{M_0}} \nonumber \\
%     & \hphantom{\le} + \invert{c\paranthesis{\frac{2^{M_0}-1}{3}}} \sum_{\substack{\substack{p|d \Rightarrow p>5 \\ d \in [\frac{2^{M_0}-1}{3},\frac{2^M-1}{3}) \\ d \text{ square-free}}}} \paranthesis{\invert{\log_2(3d+1)} - \invert{M}} + \frac{8c_3}{15} e^\gamma \frac{1+\log(M/2)}{M}. \label{ineq:c1split}
% \end{align}
\begin{align}
    c_{1,l} &= \int_2^M \sum_{\substack{p|d \Rightarrow p>5 \\ \beta_l(3d) \le x}} \frac{\mu^2(d)}{c(d)} \frac{dx}{x^2} + \int_M^\infty \sum_{\substack{p|d \Rightarrow p>5 \\ \beta_l(3d) \le x}} \frac{\mu^2(d)}{c(d)} \frac{dx}{x^2} \nonumber \\
    & \le \sum_{\substack{p|d \Rightarrow p>5 \\ d < (2^M-1)/3 \\ \beta_l(3d) < M}} \frac{\mu^2(d)}{c(d)} \paranthesis{\invert{\beta_l(3d)}-\invert{M}} \nonumber \\
    & \hphantom{\le} + \frac{8c_3}{15} e^\gamma \left\{ c_3' \paranthesis{\frac{1+\log(M/2)}{M} - \frac{1+\log(99/2)}{99}}+ \frac{1+\log(99/2)}{99} \right\}. \label{ineq:c1split}
\end{align}
Here, the inequality $d < (2^M-1)/3$ follows from (\ref{ineq:betarho}). The value of $M$ is chosen based on the height up to which $\beta_l(3d)$ is calculated, which in our case was $M = 37$. A very similar split can be done for $c_{2,l}$, albeit with $\beta_l(15d)$ and the inequality $d < (2^M-1)/15$. A larger denominator in the said inequality allows us to take $M = 39$. Adding $c_{1,l}$ and $c_{2,l}$ with the appropriate coefficients from (\ref{defn:c0}) proves Lemma \ref{lem:c0}.
\end{proof}
% We must first estimate the number of square-free numbers coprime to 30 in the interval $[2^{M_0}-1)/3,(2^M-1)/3)$. Let the number of square-free numbers $\le x$ coprime to $m$ be $Q_m(x)$. From \cite{explicit2021mathoverflow}, we have the following estimate.
% \begin{equation*}
%     \abs{Q_m(x)-\frac{6}{\pi^2} x \prod_{p|m} \paranthesis{\frac{p}{p+1}}} \le 3E(m)\sqrt{x}
% \end{equation*}
% where $E(m):=\prod_{p|m} \paranthesis{1+\invert{\sqrt{p}}}$. In our case, this results in the following upper and lower bounds on the number of square-free integers coprime to 30 in the interval $[2^{M_0}-1)/3,(2^M-1)/3)$:
% \begin{equation*}
%     Q_l < Q_{30}\paranthesis{\frac{2^M-1}{3}-1}-Q_{30}\paranthesis{\frac{2^{M_0}-1}{3}-1} < Q_u
% \end{equation*}

% We now use partial summation to estimate the sum in the second term on the right-hand side of (\ref{ineq:c1split}):
% \begin{align*}
%     \sum_{\substack{\log_2(3d+1) < M \\ d \ge (2^{M_0}-1)/3\\ d \text{ square-free}}} \paranthesis{\invert{\log_2(3d+1)} - \invert{M}} & = \frac{Q_u}{M} - \frac{Q_l}{M_0} - \frac{Q_l}{M} \\
%     & \hphantom{=} + 3 \log 2 \int_{(2^{M_0}-1)/3}^{(2^M-1)/3} \frac{Q_u(x)}{\log^2(3x+1)} \frac{dx}{(3x+1)} \\
% \end{align*}

\subsection{Computation of $\rho(fd)$ and $\beta_l(fd)$}
\label{subsec:computations}

Since computing $\beta_l(fd)$ (where $f = 3 \text{ or } 15$) in (\ref{ineq:c1split}) is a time and memory intensive process, we only compute its value when absolutely necessary. From (\ref{ineq:betarho}), (\ref{eqn:rhoproperty}), and (\ref{ineq:betarho2}), we only need to compute $\beta_l(fd)$ for square-free numbers $d$ such that $\log_2(fd+1) < M$ and for all primes $p_1,\ldots,p_k|d$ such that $\text{lcm}\left\{ \rho(f),\rho(p_1),\ldots,\rho(p_k) \right\} < M_0$ ($f$ is coprime to $d$). Moreover, we only need to compute $\beta_l(15d)$ if $\beta_l(3d) < M$. To decide whether to compute $\beta_l(fd)$ for a particular $d$, we must first compute $\rho(fd)$. Although this is a computationally hard problem, we can bypass this issue by simply checking if $\rho(p) < M$ for all primes up to $(2^{M}-1)/f$. For this purpose, we used the list of the first 2 billion primes \cite{primenolist}, verifying the primality of the numbers and the value of the prime counting function at the relevant height using Pari/GP. Finally, due to (\ref{ineq:betal}), we only need to compute $\beta_l(fd)$ if $\beta_{l-1}(fd) < M$.

We use two different algorithms to compute $\beta_l(fd)$. Algorithm \ref{algo:beta1D} has space complexity $O(d)$ whereas Algorithm \ref{algo:beta2D} is more memory intensive and has space complexity $O(d\rho(fd))$. Therefore, for large $d$, it is not practical to use Algorithm \ref{algo:beta2D}. However, Algorithm \ref{algo:beta2D} could be easier to implement using standard libraries for sparse matrices (like SciPy in Python). We implemented both the algorithms (in Python) and were thus able to confirm the values of $\beta_l(fd)$ with different algorithms. Table \ref{tab:beta} gives the value of $\beta_7(3d)$ for a few large primes along with the time required to compute it using Algorithm \ref{algo:beta2D} on an Intel Core i7 processor with 2.60GHz clock speed and 62.5 GiB of memory.

\begin{table}[h]
\centering
\caption{$\beta_7(3d)$ for large primes}
\begin{tabular}{|c|c|c|}
\hline
$p$ & $\beta_7(3p)$ & \text{Time (in s)} \\
\hline
$22366891$ & $3089168.27$ & $15238$\\
\hline
$25781083$ & $15652237.24$ & $19914$\\
\hline
$164511353$ & $56626483.49$ & $13332$\\
\hline
$616318177$ & $28269951.69$ & $6728$ \\
\hline
\end{tabular}
\label{tab:beta}
\end{table}

\section{Proof of Theorem \ref{thm:main}} \label{sec:proofofmain}

We follow the method set out in \cite{liu2004four} and \S \ref{sec:applicatons} of \cite{zhao2014four}. We first prove that every sufficiently large even integer of the form $N \equiv 4 \mod 8$ can be represented as a sum of four squares of primes and 29 powers of two. Then, by adding at most 2 powers of two, we cover all even congruence classes modulo 8, allowing us to generalise the result to all sufficiently large even integers.

Suppose
\begin{equation} \label{defn:lambdaset}
    \varepsilon(\lambda) := \{ \alpha \in (0,1] \,:\, G(\alpha) \ge \lambda L \}.
\end{equation}
By Table 1(A) of \cite{platt2015linnik}, we can choose $\lambda = \lambda_0 = 0.8844473$ for this purpose.
Setting $\mathfrak{m}_1 = \mathfrak{m} \cap \varepsilon(\lambda_0)$ and $\mathfrak{m}_2 = \mathfrak{m} \setminus \mathfrak{m}_1$, as in \cite{liu2004four}, we have
\begin{align} \label{ineq:minorarcs}
    \mathlarger{\mathlarger{\mathlarger{|}}} \int_{\mathfrak{m}} T^4(\alpha) G^{k'}(\alpha) & e(-\alpha N) d\alpha \mathlarger{\mathlarger{\mathlarger{|}}} \nonumber \\
    & \le \int_{\mathfrak{m}_1} \abs{ T^4(\alpha) G^{k'}(\alpha) d\alpha } + \int_{\mathfrak{m}_2} \abs{ T^4(\alpha) G^{k'}(\alpha) d\alpha } \nonumber \\
    & \le O\paranthesis{N^{1-\epsilon}} + (\lambda_0 L)^{k'-2l} \int_0^1 \abs{T(\alpha)^4 G(\alpha)^{2l}} d\alpha.
\end{align}
We can now estimate the integral using Lemma \ref{lem:minorarcs} and Lemma \ref{lem:sumoverkappah}. For $l \ge 5$,
\begin{align*}
    \mathlarger{\mathlarger{\mathlarger{|}}} \int_{\mathfrak{m}} T^4(\alpha) G^{k'}(\alpha) & e(-\alpha N) d\alpha \mathlarger{\mathlarger{\mathlarger{|}}} \\
    & \le O(N) + (\lambda_0 L)^{k'-2l} c_{0,l} \times 8 (11+O(\eta)+\epsilon) \mathfrak{J}(0)NL^{2l}.
\end{align*}
The estimate on the major arcs is given by Lemma \ref{lem:majorarcs}. Combining the estimates on the major arcs and minor arcs, we obtain
% \begin{equation*}
%     \int_{\mathfrak{M}} T^4(\alpha) G^k(\alpha) e(-\alpha N) d\alpha \ge 0.9 \times 8 \mathfrak{J}(1) NL^k + O\paranthesis{NL^{k-1}}.
% \end{equation*}
\begin{equation} \label{ineq:Rk}
    R_{k'}(N) \ge 8NL^{k'} \paranthesis{0.9 \mathfrak{J}(1) - {\lambda_0}^{k'-2l}(11+\epsilon)c_{0,l}(1+O(\eta))\mathfrak{J}(0)}
\end{equation}
for sufficiently small $\eta$. We choose $l = 5$ (see Table \ref{tab:l-k}) and substitute the bound on $c_{0,5}$ from Lemma \ref{lem:c0} in (\ref{ineq:Rk}). Finally, using the inequalities
$\mathfrak{J}(1) \le \mathfrak{J}(0) \le (1+O(\eta))\mathfrak{J}(1)$ from \cite{zhao2014four}*{p. 259} in (\ref{ineq:Rk}), we prove that $R_{k'}(N) > 0$ for sufficiently large $N \equiv 4 \mod 8$ and $k'=29$. Therefore, for $k = k'+2 = 31$, we have $R_k(N) > 0$ for sufficiently large even $N$, thus establishing Theorem \ref{thm:main}.

% \begin{table}[h]
% \centering
% \caption{Powers of two for different values of $l$}
% \begin{tabular}{|c|c|}
% \hline
% $l$ & $k$ \\
% \hline
% $2$ & $28$\\
% \hline
% $3$ & $29$\\
% \hline
% $4$ & $31$\\
% \hline
% $5$ & $33$\\
% \hline
% $6$ & $35$\\
% \hline
% $7$ & $37$\\
% \hline
% $8$ & $39$\\
% \hline
% \end{tabular}
% \label{tab:l-k}
% \end{table}

\begin{table}[h]
\centering
\caption{Minimum $k'$ such that $R_{k'}(N) > 0$ for $5 \le l \le 8$}
\begin{tabular}{|c|c|c|c|c|c|c|c|c|c|}
\hline
$l$ & $5$ & $6$ & $7$ & $8$ \\
\hline
$k'$ & $29$ & $31$ & $33$ & $35$\\
\hline
\end{tabular}
\label{tab:l-k}
\end{table}

\subsection{Further improvements}
\label{subsec:further}

To further improve Theorem \ref{thm:main} via the strategy used in this paper, one could choose a higher value of $M$ in (\ref{ineq:c1split}) and carry out the necessary computations as described in \S \ref{subsec:computations}. However, any significant improvement to Lemma \ref{lem:c0} would require substantial amount of computational resources if the algorithms used are similar to the ones described in \S \ref{subsec:computations}. 

The other possibility is to prove Lemma \ref{lem:minorarcs} for $1 \le l \le 4$. Table \ref{hypotab:l-k} gives the number of powers of two in Theorem \ref{thm:main} assuming that Lemma \ref{lem:minorarcs} holds for $1 \le l \le 4$.

Although we have not attempted to improve the major arcs estimate here, improving the factor 0.9 in Lemma \ref{lem:majorarcs} can give us an overall improvement in Theorem \ref{thm:main}. In particular, proving that this factor $> 0.9379$ for $k \ge 28$ would allow us to conclude that 30 powers of two suffice. This could be done by choosing a higher value of $m_0$ in the proof of \cite[Lemma~4.4]{zhao2014four} but the computation time for calculating the new factor increases substantially as we consider higher values of $m_0$. Choosing $m_0 = 19$, for example, was not enough to improve the result since this factor came out to be $0.92\ldots$ and took approximately 1300 s to compute on an Intel Core i5-7500 processor with 3.40GHz clock speed. For $m_0 = 23$, the computation did not terminate after more than 10 hours. It is plausible, however, that with a better algorithm and parallel computation, one can improve this result.

\section{Proof of Theorem \ref{thm:application} and other applications}
\label{sec:applicatons}

\begin{proof}[Proof of Theorem \ref{thm:application}]
The proof is straightforward. We replace the estimate $c_0 < 0.69$ in Lemma 2.2 of \cite{guangshi2018on} by $c_{0,5}/3 < 0.7773/3$ from Lemma \ref{lem:c0} (see Remark \ref{rem:c0isdifferent}). Now, using the estimate for $R_k(N)$ at the end of \S 3 of \cite{guangshi2018on} with the new value of $c_0$, we find that $\mathfrak{K} = 12$ suffices.
\end{proof}

Many similar problems in this area can be improved, either using Lemma \ref{lem:c0} directly or by carrying out the kind of computations done in \S 3. We have not attempted to improve them here but by listing them, we hope to draw attention to them for future work on the subject. One set of problems is simultaneous Linnik-style approximation of two integers. More precisely, what is the upper bound on the value of $k$ such that we can write
\begin{align*}
    N_1 &= {p_1}^2+{p_2}^2+{p_3}^2+{p_4}^2+2^{\nu_1}+\cdots+2^{\nu_k},\\
    N_2 &= {p_5}^2+{p_6}^2+{p_7}^2+{p_8}^2+2^{\nu_1}+\cdots+2^{\nu_k},
\end{align*}
for every pair of sufficiently large positive even integers $N_1$ and $N_2$ satisfying $N_2 \gg N_1 > N_2$ and $N_1 \equiv N_2 \mod 24$? In \cite{hu2021apair}, Hu, Kong, and Liu prove that $k = 98$ suffices using the methods employed by Zhao \cite{zhao2014four} and Kong and Liu \cite{kong2017on} and in the process, improving upon \cites{liu2013on,hu2015on}. Similarly, what is the upper bound on the value of $\mathfrak{K}$ such that
\begin{align*}
    N_1 &= p_1+{p_2}^2+{p_3}^2+2^{\nu_1}+\cdots+2^{\nu_{\mathfrak{K}}},\\
    N_2 &= p_4+{p_5}^2+{p_6}^2+2^{\nu_1}+\cdots+2^{\nu_{\mathfrak{K}}},
\end{align*}
for every pair of sufficiently large positive odd integers $N_1$ and $N_2$ satisfying $N_2 \gg N_1 > N_2$? Hu \cite{hu2019apair} established $\mathfrak{K} = 82$, thereby improving upon \cites{liu2013onpairs,hu2016on}.

A second set of problems involve Diophantine approximation using primes and prime powers. These can be considered the real analogues of Linnik--Goldbach problems. For example, as a real analogue of the representation of a number as the sum of a prime and two squares of primes, we consider numbers of the form
\begin{equation*}
    \lambda_1 p_1 + \lambda_2 {p_2}^2 + \lambda_3 {p_3}^2 + \mu_1 2^{\nu_1}+\mu_2 2^{\nu_2} +\cdots+\mu_s 2^{\nu_s},
\end{equation*}

where $p_1,p_2,p_3$ are prime numbers, $\nu_1,\nu_2,\ldots,\nu_s$ are positive integers, and the coefficients $\lambda_1,\lambda_2,\lambda_3,\mu_1,\mu_2,\ldots,\mu_s$ are real numbers satisfying certain conditions. For a precise description of the problem, see Theorem 1.1 in \cite{liu2020diophantine} which sharpens the results obtained in \cites{li2005diophantine,languasco2012on} using techniques from \cite{zhao2014four}.

\begin{table}[h]
\centering
\caption{Conditional number of powers of two ($k$) in Theorem \ref{thm:main} for $1 \le l \le 4$}
\begin{tabular}{|c|c|c|c|c|c|c|c|c|c|}
\hline
$l$ & $1$ & $2$ & $3$ & $4$ \\
\hline
$k$ & $26$ & $25$ & $27$ & $29$\\
\hline
\end{tabular}
\label{hypotab:l-k}
\end{table}

\newpage
\section{Appendix}
\label{sec:appendix}

\begin{algorithm}[H] \label{algo:beta1D}
\caption{Computing $\beta$ using 1D vectors}
\small
\SetKwInOut{KwIn}{Input}
\SetKwInOut{KwOut}{Output}
\KwIn{A square-free integer $d$, a positive integer $l$, and a multiplication factor $f$.}
\KwOut{The value $\beta_l(fd)$.}
\SetKwFunction{ccs}{countCongruenceSoln}
\SetKwFunction{cgcs}{computeCombSum}
\SetKwProg{Def}{def}{:}{end}

\tcp{Computes sums of residues modulo $fd$ and their frequencies}

\Def{$\cgcs(resCtr1,resCtr2,fd)$}{
Initialise $combSum$ as an empty dictionary/hash-table \\
\For{every $(key1,value1)$ pair in $resCtr1$}{
\For{every $(key2,value2)$ pair in $resCtr2$}{
Set $k$ to $key1+key2 \mod fd$ \\
Set $v$ to $value1*value2$ \\
\eIf{$k$ is a key in $combSum$}{
Update the value corresponding to the key $k$ in $combSum$ by adding $v$ to it
}{
Insert the key-value pair $(k,v)$ in $combSum$
}
}
}
return $CombSum$
}

\tcc{A slightly more optimised version of $\cgcs$ can be written when $resCtr1$ and $resCtr2$ are identical.}

\tcp{Computes $N(fd,h_l)$, as defined in (\ref{defn:Ndhl})}

\Def{$\ccs(fd,l,residues)$}{
Initialise $resCtr$ with $(r,1)$ key-value pairs for every residue $r$ in $residues$ \\
Initialise the list $binComb$ with $resCtr$ as its first element \\
Set $n$ to the length of binary representation of $l$ \\
\For{$i=1,\ldots, n-1$}{
Update $resCtr$ to the result of $\cgcs(resCtr,resCtr,fd)$\\
Update $binComb$ by inserting $resCtr$ at the top
}
Initialise $genComb$ to $resCtr$ \\
\For{$i=1,\ldots, n-1$}{
Update $resCtr$ to the $i$-th element of $binComb$ \\
\If{$(i+1)$-th bit is 1}{
Update $genComb$ to the result of $\cgcs(genComb,resCtr,fd)$
}
}
Set $congruenceSoln$ to the sum of $v^2$ for every value $v$ in $genComb$ \\ 
return $congruenceSoln$
}

Set $\rho$ to the order of 2 in the residue class $\mathbb{Z}/fd\mathbb{Z}$ \\
Set the list $residues$ to $1,2,2^2,\ldots,2^{\rho-1}$ \\
Set $\beta$ to $\rho^{2l}/\ccs(fd,l,residues)$ \\
\KwRet{$\beta$}
\end{algorithm}

\begin{algorithm}[h] \label{algo:beta2D}
\caption{Computing $\beta$ using 2D sparse matrices}
\small
\SetKwInOut{KwIn}{Input}
\SetKwInOut{KwOut}{Output}
\KwIn{A square-free integer $d$, a positive integer $l$, and a multiplication factor $f$.}
\KwOut{The value $\beta_l(fd)$.}
\SetKwFunction{csm}{constructSparseMatrix}
\SetKwProg{Def}{def}{:}{end}

\tcc{Creates a sparse matrix whose $(i,j)$-th element
represents the number of ways to transition from residue $i$ to residue $j$
by adding exactly one power of 2}

\Def{$\csm(fd,residues)$}{
Initialise a sparse matrix $mat$ \\
\For{$i=0,\ldots,fd-1$}{
\For{every $residue$ in $residues$}{
Set $j$ to $i+residue \mod fd$ \\
Set the $(i,j)$-th element of $mat$ to 1 \\
}
}
return $mat$
}

Set $\rho$ to the order of 2 in the residue class $\mathbb{Z}/fd\mathbb{Z}$ \\
Set the list $residues$ to $1,2,2^2,\ldots,2^{\rho-1}$ \\
Set $mat$ to the result of $\csm(fd,residues)$ \\
Compute the $l$-th power of $mat$ \tcp{matrix product}
\tcc{The $l$-th power of the original transition matrix encodes the number of ways to transition from one residue to another with exactly $l$ powers of 2.}
Set $congruenceSoln$ to the sum of $e^2$ for every element $e$ in the first row of the $l$-th power of $mat$ \\
Set $\beta$ to $\rho^{2l}/congruenceSoln$ \\
\KwRet{$\beta$}
\end{algorithm}

% \newpage
\section*{Acknowledgements}
The author thanks Benjamin Kaehler for suggesting Algorithm \ref{algo:beta2D} as an alternative way to compute $\beta$ and thus, providing an independent method to verify the results. The author also thanks Tim Trudgian for suggesting the $c_3'$ improvement in (\ref{ineq:forinexact}) and for his other suggestions and guidance throughout the course of this project. Finally, the author would like to thank Lilu Zhao for the helpful email correspondence with them.

\bibliographystyle{amsplain}
\bibliography{ref.bib}
\end{document}